\newtheorem{theorem}{Theorem}
\newtheorem{cor}{Corollary}
\newtheorem{prop}{Proposition}
\newtheorem{defs}{Definition}
\newtheorem{example}{Example}
\newtheorem{rem}{Remark}
\newcommand{\ds}{\displaystyle}
\newcommand{\R}{\mathbb{R}}
\newcommand{\N}{\mathbb{N}}
\newcommand{\C}{\mathbb{C}}
\newcommand{\K}{\mathbb{K}}
\newcommand{\e}{\varepsilon}
\DeclareMathOperator{\supp}{supp}
\DeclareMathOperator{\dist}{dist}
\DeclareMathOperator{\re}{Re}
\numberwithin{equation}{section}
\title[The strong Bishop-Phelps-Bollob\'as Property]
{The strong Bishop-Phelps-Bollob\'as Property}
\author[S. Dantas]{Sheldon Dantas}
\address{Departamento de An\'{a}lisis Matem\'{a}tico,
Universidad de Valencia, Doctor Moliner 50, 46100 Burjasot
(Valencia), Spain} \email{sheldon.dantas@uv.es}
\thanks{
The author was supported by MINECO MTM2014-57838-C2-2-P and CAPES, Doutorado Pleno CSF, BEX 0050/13-0}
\subjclass[2010]{Primary 46B20,  Secondary 46B28, 46B04}
\date{}
\keywords{Bishop-Phelps theorem, Bishop-Phelps-Bollob\'as Property, Norm attaining operators}
\begin{document}

\begin{abstract}
In this paper we introduce the strong Bishop-Phelps-Bollob\'as property (sBPBp) for bounded linear operators between two Banach spaces $X$ and $Y$. This property is motivated by a Kim-Lee result which states, under our notation,  that a Banach space $X$  is uniformly convex if and only if the pair $(X,\K)$ satisfies the sBPBp. Positive results of pairs of Banach spaces $(X,Y)$  satisfying this property are given  and  concrete  pairs of Banach spaces $(X, Y)$  failing it are exhibited. A complete characterization of the sBPBp for the pairs $(\ell_p, \ell_q)$ is also provided.
 \end{abstract}

\maketitle

\section{Introduction}
\par

Let $X$ and $Y$ be Banach spaces over a real or complex field $\K$. We use the traditional notations $S_X$ and $B_X$ for the unit sphere and the closed unit ball of the space $X$, respectively. The Banach space of all bounded linear operators $T: X \to Y$ will be represented by $\mathcal{L}(X, Y)$. In particular, when $Y = \K$ we denote $\mathcal{L}(X, \K)$ simply by putting $X^*$ called the dual space of $X$. We say that an operator $T \in \mathcal{L}(X, Y)$ attains its norm if there exists $x_0 \in S_X$ such that $\|T(x_0)\| = \|T\| = \sup_{x \in S_X} \|T(x)\|$. In this case, we say that $T$ is a norm attaining operator and it attains its norm at $x_0$. The subset of $\mathcal{L}(X, Y)$ of all norm attaining operators is denoted by $NA(X, Y)$. We recall that a bounded linear operator is compact if the closure of the image of the unit ball is compact. For $1 \leq p \leq \infty$ we denote by $\ell_p^n (\K)$ the euclidean space $\K^n$ endowed with the $p$-norm $\|x\|_p^p := |x_1|^p + \ldots + |x_n|^p$ with $x = (x_1, \ldots, x_n) \in \K^n$ and $1 \leq p < \infty$ and $\ell_{\infty}^n$ endowed with the sup-norm $\|x\|_{\infty} := \sup_{j \in \N} |x_j|$. To simplify the notation we put just $\ell_p^n$ when the field that we are working is specified.

The Bishop-Phelps theorem \cite{BP} says that every bounded linear functional can be approximated by norm attaining functionals. In other words, this means that the set of all norm attaining functionals on a Banach space $X$ is dense in its dual space $X^*$. It was proved by Lindenstrauss \cite{Lind} in 1963 that, in general, the same result does not work for bounded linear operators. More precisely, he exhibited a Banach space $X$ such that the set $NA(X, X)$ is not dense in $\mathcal{L}(X, X)$. Seven years later, Bollob\'as proved a numerical version of the Bishop-Phelps theorem which nowdays is known as the Bishop-Phelps-Bollob\'as theorem \cite{Bol}. As a consequence of \cite[Theorem 2.1]{CKMM} we may enunciate this theorem as follows.

\begin{theorem} \emph{(Bishop-Phelps-Bollob\'as theorem, \cite{BP}, \cite{CKMM})} Let $\e \in (0, 2)$ and suppose that $x_0 \in B_X$ and $x_0^* \in B_{X^*}$ satisfy
\begin{equation*}
\re x_0^*(x_0) > 1 - \frac{\e^2}{2}.
\end{equation*}
Then, there are $x_1 \in S_X$ and $x_1^* \in S_{X^*}$ such that
\begin{equation*}
|x_1^*(x_1)| = 1, \ \ \|x_1 - x_0\| < \e \ \ \mbox{and} \ \ \|x_1^* - x_0^*\| < \e.
\end{equation*}
\end{theorem}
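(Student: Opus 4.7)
The plan is to apply the classical Bishop-Phelps cone method to $x_0^*$, producing a functional $f_1$ near $x_0^*$ that attains its norm over $B_X$ at a point $x_1$ near $x_0$, and then to normalise $f_1$ to obtain the required unit functional $x_1^*$.

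As a preliminary, the hypothesis forces $\|x_0^*\|\geq\re x_0^*(x_0)>1-\e^2/2$, and after a harmless adjustment I may assume $\|x_0^*\|=1$. Set $\eta:=1-\re x_0^*(x_0)\in(0,\e^2/2)$ and fix an auxiliary parameter $k\in(0,1)$. Introduce the closed convex cone
\[
K:=\{x\in X:\re x_0^*(x)\geq k\|x\|\},
\]
together with the partial order on $B_X$ defined by $u\preceq v\iff v-u\in K$. Along any $\preceq$-chain, the real-valued quantity $\re x_0^*(u)$ is monotone and bounded above by $1$, hence Cauchy; the cone inequality $k\|v-u\|\leq\re x_0^*(v-u)$ then promotes this to norm-Cauchyness. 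Limits lie in $B_X$ and dominate the chain, so Zorn's lemma produces a $\preceq$-maximal $x_1\in B_X$ above $x_0$.

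From $x_0\preceq x_1$, $\|x_1-x_0\|\leq \eta/k$ follows directly. Maximality of $x_1$ forces $(x_1+K)\cap B_X=\{x_1\}$, and Hahn-Banach separation between these two convex sets yields a non-zero $f_1\in X^*$ attaining its norm over $B_X$ at $x_1$ (so $f_1(x_1)=\|f_1\|$) and lying in the polar cone $K^\circ$. A bipolar (sandwich-theorem) computation, applied to the sublinear function $p(x)=k\|x\|-\re x_0^*(x)$ whose zero sub-level set is $K$, identifies $K^\circ=\{f\in X^* : \exists\,\lambda\geq 0,\,\|f-\lambda x_0^*\|\leq\lambda k\}$. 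Consequently I may write $f_1=\lambda_1 x_0^*+g_1$ with $\lambda_1>0$ and $\|g_1\|\leq\lambda_1 k$. Setting $x_1^*:=f_1/\|f_1\|\in S_{X^*}$ then gives $x_1^*(x_1)=1$, so $|x_1^*(x_1)|=1$ and $\|x_1\|=\|x_1^*\|=1$ follow automatically.

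The principal obstacle is to choose $k$ so that both $\|x_1-x_0\|<\e$ and $\|x_1^*-x_0^*\|<\e$ hold under the sole hypothesis $\eta<\e^2/2$. A crude triangle-inequality normalisation from $\|f_1-\lambda_1 x_0^*\|\leq\lambda_1 k$ and $\|f_1\|\in[\lambda_1(1-k),\lambda_1(1+k)]$ yields only $\|x_1^*-x_0^*\|\leq 2k/(1-k)$, which combined with $\eta/k<\e$ would require the strictly stronger condition $\eta<\e^2/(2+\e)$. Closing the gap requires exploiting that $f_1$ attains at the specific point $x_1$, at which $\re x_0^*(x_1)\geq\re x_0^*(x_0)>1-\eta$ is already close to $1$, forcing $\|f_1\|$ to be tightly pinned to $\lambda_1$ rather than only to within a factor $(1\pm k)$; carrying out this sharper estimate, and propagating strict inequalities throughout, is the step I expect to require the most care. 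Complex scalars are handled by rotating $f_1$ by a suitable unimodular constant so that $f_1(x_1)$ is a non-negative real number, which does not affect any of the norm bounds.
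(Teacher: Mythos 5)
The paper itself gives no proof of this statement: it is quoted, in its sharp form, from \cite[Theorem~2.1]{CKMM} (Bollob\'as' original argument gives weaker numerics), so your proposal has to stand on its own as a proof of the sharp version — and as written it does not reach it. The cone construction, the Zorn argument, the separation step and the dual-cone decomposition $f_1=\lambda_1 x_0^*+g_1$ with $\|g_1\|\le\lambda_1 k$ are all standard and fine, but the content of the theorem is quantitative, and you have located the gap yourself: the estimates you actually establish ($\|x_1-x_0\|\le\eta/k$ and $\|x_1^*-x_0^*\|\le 2k/(1-k)$) only yield the conclusion under $\eta<\e^2/(2+\e)$, strictly weaker than the stated hypothesis $\eta<\e^2/2$. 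The step you defer is not a routine tightening, and the mechanism you sketch for it does not work: attainment at $x_1$ gives $\|f_1\|=\lambda_1\,\re x_0^*(x_1)+\re g_1(x_1)\ge\lambda_1(1-\eta-k)$, because the only available control on $\re g_1(x_1)$ is $\ge-\lambda_1 k$; this is \emph{weaker} than the bound $\|f_1\|\ge\lambda_1(1-k)$ you already have from the triangle inequality, so the observation $\re x_0^*(x_1)\ge 1-\eta$ pins nothing further. (An improvement does exist when $\|x_0^*\|=1$: comparing $x_1^*$ with $x_1^*/\mu$, where $\mu=\lambda_1/\|f_1\|$, instead of $f_1$ with $\lambda_1x_0^*$, gives $\|x_1^*-x_0^*\|\le 2k$, and then any $k$ chosen strictly between $\eta/\e$ and $\e/2$ makes both inequalities strict — but that is a different estimate from the one you propose.)

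The second, related gap is the opening ``harmless adjustment'' to $\|x_0^*\|=1$, which is precisely where the sharp constant is endangered. The hypothesis only gives $\|x_0^*\|>1-\e^2/2$, so replacing $x_0^*$ by $x_0^*/\|x_0^*\|$ costs up to $\e^2/2$ in the final bound on $\|x_1^*-x_0^*\|$, consuming the entire margin; worse, normalising can destroy the other estimate too: e.g.\ with $\e=1.8$, $\|x_0^*\|=0.2$ and $\re x_0^*(x_0)=-0.2$ the hypothesis $\re x_0^*(x_0)>1-\e^2/2$ holds, yet for the normalised functional one has $\re\bigl(x_0^*/\|x_0^*\|\bigr)(x_0)=-1$, so your bound $\|x_1-x_0\|\le\eta/k$ becomes $\ge 2$ for every admissible $k$ and says nothing. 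Handling $x_0\in B_X$ and $x_0^*\in B_{X^*}$ (balls, not spheres), for all $\e\in(0,2)$ and with the exact constant $\e^2/2$ and strict inequalities, is exactly the content of \cite[Theorem~2.1]{CKMM}, and it requires a genuinely different coupling of the two distances than the single-parameter cone bookkeeping outlined here. In short: correct classical skeleton, but the sharp statement to be proved is exactly the part left unproved, and the patch you indicate does not close it.
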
	
Since we do not have a Bishop-Phelps version for bounded linear operators, we can not expect a Bishop-Phelps-Bollob\'as version for this type of functions either. So it is natural to study the conditions that the Banach spaces $X$ and $Y$ must satisfy to get a theorem of this nature. In 2008, Acosta, Aron, Garc\'ia and Maestre introduced a definition in order to attain this problem.

\begin{defs} \label{BPBp} (Bishop-Phelps-Bollob\'as property, \cite{AAGM2}) We say that a pair of Banach spaces $(X, Y)$ satisfies the {\it Bishop-Phelps-Bollob\'as property} (BPBp, for short) when given $\e > 0$, there exists $\eta(\e) > 0$ such that whenever $T \in S_{\mathcal{L}(X, Y)}$ and $x_0 \in S_X$ are such that
\begin{equation*}
\|T(x_0)\| > 1 - \eta(\e),
\end{equation*}
there are $S \in S_{\mathcal{L}(X, Y)}$ and $x_1 \in S_X$ such that
\begin{equation*}
\|S(x_1)\| = 1, \ \ \ \|x_1 - x_0\| < \e \ \ \ \mbox{and} \ \ \ \|S - T\| < \e.
\end{equation*}
\end{defs}

There are many classical Banach spaces that satisfy the BPBp. For example, when $X$ and $Y$ are finite dimensional spaces, the pair $(X, Y)$ has this property \cite[Proposition 2.4]{AAGM2}. Also, if $Y$ has the property $\beta$ of Lindenstrauss, as $c_0$ and $\ell_{\infty}$ do, then the pair $(X, Y)$ satisfies the BPBp for all Banach space $X$ \cite[Theorem 2.2]{AAGM2}. More positive results appear when we assume that the range space $Y$ is uniformly convex: the pairs $(\ell_{\infty}^n, Y)$, $(c_0, Y)$ and $(L_{\infty}(\mu), Y)$ all satisfy the BPBp (see \cite{AAGM2}, \cite{Kim} and \cite{KLL}, respectively). Also if $X$ is a uniformly convex Banach space, then the pair $(X, Y)$ has the BPBp for all Banach space $Y$ \cite[Theorem 3.1]{KL}.

Just to help make the paper entirely accessible, we remember the concept of uniform convexity. A Banach space $X$ is uniformly convex if given $\e > 0$, there exists $\delta(\e) > 0$ such that whenever $x_1, x_2 \in S_X$ satisfy $\| x_1 - x_2\| \geq \e$, then $\|\frac{1}{2}(x_1 + x_2) \| \leq 1 - \delta(\e)$. We recall that if $p \in (1, \infty)$ then $\ell_p$ is uniformly convex.

In 2014, Kim and Lee \cite[Theorem 2.1]{KL} given a characterization for uniformly convex Banach spaces that associate this type of spaces with a peculiarity on the Bishop-Phelps-Bollob\'as property. More precisely, they proved that a Banach space $X$ is uniformly convex if and only if given $\e > 0$ then we are able to find a positive number $\eta(\e) > 0$ such that whenever $x^* \in S_{X^*}$ and $x_0 \in S_X$ satisfy the relation
\begin{equation*}
|x^*(x_0)| > 1 - \eta(\e),
\end{equation*}
there exists a vector $x_1 \in S_X$ such that
\begin{equation*}
|x^*(x_1)| = 1 \ \ \ \mbox{and} \ \ \ \|x_0 - x_1\| < \e.
\end{equation*}
Note that the theorem says that a Banach space $X$ is uniformly convex if and only if the pair $(X, \K)$ satisfies the Bishop-Phelps-Bollob\'as property without changing the initial functional $x^*$, that is, the functional that almost attains its norm at some point $x_0$ is the same that attains its norm at the new vector that is close to $x_0$.

In this paper, we study this last result for bounded linear operators and we call it as the strong Bishop-Phels-Bollob\'as property (sBPBp). First, we study it in the case that the real number $\eta( \ . \ )$ depends of $\e > 0$ and also of a fixed operator $T$, and we get some positives results about it. After that, we study the property in the uniform case, that is, when the number $\eta$ depends only of $\e > 0$ as we are used to work when we work with the BPBp. As we will see in the next section, we get many negatives results about the uniform case and we use these results to prove that there are uniformly convex Banach spaces $X$ and infinite dimensional Banach spaces $Y$ such that the pair $(X, Y)$ fails the sBPBp. Finally, we give a complete characterization to the sBPBp for the pair $(\ell_p, \ell_q)$ which describes when these pairs satisfy the property.

\section{The Strong Bishop-Phelps-Bollob\'as Property}

In this section we study the strong Bishop-Phelps-Bollob\'as property. Namely, we study the conditions that the Banach spaces $X$ and $Y$ must have and the hypothesis that we have to add to get a Kim-Lee type theorem for bounded linear operators. The Kim-Lee theorem is enunciated as follows.

\begin{theorem} \label{Kim-Lee}
\emph{(Kim-Lee theorem, \cite{KL})} A Banach space $X$ is uniformly convex if and only if given $\e > 0$, there exists $\eta(\e) > 0$ such that whenever $x^* \in S_{X^*}$ and $x_0 \in S_X$ satisfy
\begin{equation*}
|x^*(x_0)| > 1 - \eta(\e),
\end{equation*}
there is $x_1 \in S_X$ such that
\begin{equation*}
|x^*(x_1)| = 1 \ \ \ \mbox{and} \ \ \ \|x_1 - x_0\| < \e.
\end{equation*}
\end{theorem}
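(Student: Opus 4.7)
The statement is an `if and only if', so I plan to handle each direction separately.

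\emph{Uniform convexity implies the property.} Since every uniformly convex space is reflexive (Milman--Pettis), every $x^* \in S_{X^*}$ attains its norm at some $y \in S_X$ with $|x^*(y)| = 1$. Multiplying $y$ by a suitable unimodular scalar, I may assume $\re x^*(y) = 1$ and $\re x^*(x_0) = |x^*(x_0)| > 1 - \eta(\varepsilon)$, so that
\begin{equation*}
\left\| \frac{x_0 + y}{2} \right\| \geq \re x^*\!\left( \frac{x_0 + y}{2}\right) > 1 - \frac{\eta(\varepsilon)}{2}.
\end{equation*}
Setting $\eta(\varepsilon) := 2\delta(\varepsilon)$, with $\delta$ the modulus of uniform convexity of $X$, the definition of uniform convexity forces $\|x_0 - y\| < \varepsilon$, so that $x_1 := y$ does the job.

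\emph{The property implies uniform convexity.} I argue by contradiction. Applying the hypothesis to any sequence $z_n \in S_X$ with $|x^*(z_n)| \to 1$ shows every $x^* \in S_{X^*}$ is norm-attaining, so $X$ is reflexive by James' theorem. Suppose now that $X$ is not uniformly convex: fix $\varepsilon_0 > 0$ and sequences $u_n, v_n \in S_X$ with $\|u_n - v_n\| \geq \varepsilon_0$ and $\|u_n + v_n\| \to 2$. Pick norming functionals $x_n^* \in S_{X^*}$ of the normalised midpoints; then $\re x_n^*(u_n)$ and $\re x_n^*(v_n)$ both tend to $1$. Applying the hypothesis with threshold $\varepsilon_0/4$, for large $n$ I obtain $\tilde u_n, \tilde v_n \in S_X$ with $|x_n^*(\tilde u_n)| = |x_n^*(\tilde v_n)| = 1$ and $\|\tilde u_n - u_n\|, \|\tilde v_n - v_n\| < \varepsilon_0/4$; a small unimodular rotation (whose angle tends to zero) allows us to assume $x_n^*(\tilde u_n) = x_n^*(\tilde v_n) = 1$ with $\|\tilde u_n - \tilde v_n\| \geq \varepsilon_0/2$.

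To finish I perturb $x_n^*$ in a direction that moves $\tilde v_n$ off its norming set. By Hahn--Banach pick $f_n \in S_{X^*}$ with $\re f_n(\tilde u_n - \tilde v_n) = \|\tilde u_n - \tilde v_n\| \geq \varepsilon_0/2$, and for a small $\lambda > 0$ define $h_n := (x_n^* + \lambda f_n)/\|x_n^* + \lambda f_n\|$. Writing $M_n := \|x_n^* + \lambda f_n\|$, the bounds $1 + \lambda \re f_n(\tilde u_n) \leq M_n \leq 1 + \lambda$ yield $|h_n(\tilde v_n)| \geq 1 - C\lambda$ for an absolute constant $C$. Moreover, in the real case any ``positive'' norming point $z$ of $h_n$ (so $h_n(z) = 1$) satisfies $\re f_n(z) \geq \re f_n(\tilde u_n)$, from $M_n = x_n^*(z) + \lambda f_n(z)$ together with $x_n^*(z) \leq 1$, and hence
\begin{equation*}
\|z - \tilde v_n\| \geq \re f_n(z - \tilde v_n) \geq \varepsilon_0/2;
\end{equation*}
the ``negative'' case $h_n(z) = -1$ is even better since $\|z - \tilde v_n\| \geq |h_n(z) - h_n(\tilde v_n)| \approx 2$. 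Taking $\lambda$ small enough that $1 - C\lambda > 1 - \eta(\varepsilon_0/3)$, the sBPBp applied to $h_n$ and $\tilde v_n$ would produce a norming point of $h_n$ within $\varepsilon_0/3$ of $\tilde v_n$, contradicting the separation just established.

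The main obstacle I expect is precisely this perturbation step: tracking the unimodular phases in the complex case so that the inequality $\re f_n(z) \geq \re f_n(\tilde u_n)$ still holds for every complex norming point $z$ of $h_n$, and dealing with the possibility that $\re f_n(\tilde u_n)$ is negative (which can be fixed in advance by replacing $f_n$ with the normalised combination $(f_n + t x_n^*)/\|f_n + t x_n^*\|$ for a suitable $t > 0$, at the price of a controlled loss in the separation constant).
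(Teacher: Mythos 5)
The paper itself offers no proof of Theorem \ref{Kim-Lee}: it is imported verbatim from Kim--Lee \cite{KL}, so your argument has to stand on its own. Your forward direction does: Milman--Pettis gives reflexivity, hence a norming point $y$ of $x^*$, and with $\eta(\varepsilon):=2\delta(\varepsilon)$ the estimate $\bigl\|\tfrac{x_0+y}{2}\bigr\|>1-\delta(\varepsilon)$ forces $\|x_0-y\|<\varepsilon$; the unimodular rotation of $x_0$ is harmless because the output can be rotated back. In the converse, the scheme (norming functionals of the midpoints, the hypothesis applied to $u_n,v_n$, then the perturbation $h_n=(x_n^*+\lambda f_n)/\|x_n^*+\lambda f_n\|$ whose norming points are pushed away from $\tilde{v}_n$) is sound, and in the \emph{real} case your constants do close: $x_n^*(\tilde{u}_n)=x_n^*(\tilde{v}_n)=1$ is automatic, any $z$ with $h_n(z)=1$ satisfies $f_n(z)\ge f_n(\tilde{u}_n)$ and hence $\|z-\tilde{v}_n\|\ge\|\tilde{u}_n-\tilde{v}_n\|\ge\varepsilon_0/2>\varepsilon_0/3$, and the case $h_n(z)=-1$ is harmless as you note. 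Two side remarks: James/reflexivity is never actually used in this direction, and your worry about $\re f_n(\tilde{u}_n)<0$ is unfounded, since the inequality $\re f_n(z)\ge\re f_n(\tilde{u}_n)$ needs no sign assumption and you only ever use $\re f_n(z-\tilde{v}_n)\ge\re f_n(\tilde{u}_n-\tilde{v}_n)=\|\tilde{u}_n-\tilde{v}_n\|$; no modification of $f_n$ is needed.

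The genuine gap is the complex case, exactly where you expected it, and it is quantitative: with your stated constants the contradiction does not materialize. First, the rotation making $x_n^*(\tilde{u}_n)=x_n^*(\tilde{v}_n)=1$ does \emph{not} have angle tending to zero; all you know is $|x_n^*(\tilde{u}_n)-x_n^*(u_n)|<\varepsilon_0/4$ and $x_n^*(u_n)\to 1$, so each rotation can move its point by up to $\varepsilon_0/4+o(1)$, and the post-rotation separation $\ge\varepsilon_0-2\cdot\tfrac{\varepsilon_0}{4}-2\cdot\tfrac{\varepsilon_0}{4}-o(1)$ can degenerate to $0$. Second, the point $z$ produced by the hypothesis only satisfies $|h_n(z)|=1$, say $h_n(z)=e^{i\theta}$; your inequality $\re f_n(\cdot)\ge\re f_n(\tilde{u}_n)$ applies to the phase-corrected point $e^{-i\theta}z$, and transferring the distance bound back to $z$ costs $|e^{i\theta}-1|\le\|z-\tilde{v}_n\|+O(\lambda)<\varepsilon_0/3+O(\lambda)$, leaving only $\|z-\tilde{v}_n\|\ge\varepsilon_0/2-\varepsilon_0/3-O(\lambda)$, which does not contradict $\|z-\tilde{v}_n\|<\varepsilon_0/3$. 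Both defects vanish if you run the argument with thresholds much smaller than $\varepsilon_0$, which is legitimate since $\eta(\varepsilon)$ is available for every $\varepsilon$: apply the hypothesis to $u_n,v_n$ with $\varepsilon=\varepsilon_0/16$, so the separation after rotation is still at least $\varepsilon_0/2$; then apply it to $h_n,\tilde{v}_n$ with $\varepsilon=\varepsilon_0/16$ and $\lambda$ small, so the phase defect is at most $\varepsilon_0/16+O(\lambda)$ and $\|z-\tilde{v}_n\|\ge\varepsilon_0/2-\varepsilon_0/16-O(\lambda)>\varepsilon_0/16$, the desired contradiction. With those adjustments your proof is complete.
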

As we mentioned before, it is like a Bishop-Phelps-Bollob\'as property without changing the initial functional $x^*$. A natural question arises: the result is true for other pairs of Banach spaces $(X, Y)$ with $X$ and $Y$ having additional hypothesis considering bounded linear operators instead bounded linear functionals? Although it is more natural put the question just like that, it is seems to us to be a strong problem in the sense that it will be hard to find concrete pairs of Banach spaces $(X, Y)$ satisfying the Kim-Lee theorem for bounded linear operators. So we start by considering that the positive real number $\eta( \ . \ ) > 0$ that appears in Theorem \ref{Kim-Lee} does not depend only of $\e > 0$ but also of a given operator $T$ fixed. Before we do that, we want to comment that Carando, Lassalle and Mazzitelli \cite{Carando} defined a weak BPBp for ideals of multilinear mappings where the real positive number $\eta ( \ . \ )$ in the definition of the BPBp in this context depends of a given $\e > 0$ and also of the ideal norm of the operator defined on a normed ideal of $N$-linear mappings. In other words, a normed ideal of $N$-linear mappings $\mathcal{U} = \mathcal{U}(X_1 \times \ldots \times X_N; Y)$ where $X_1, \ldots, X_N, Y$ are Banach spaces has the weak BPBp if for each $\varPhi \in \mathcal{U}$ with $\|\varPhi\| = 1$ and $\e > 0$, there exists $\eta(\e, \|\varPhi\|_{\mathcal{U}}) > 0$ depending also of $\| \varPhi \|_{\mathcal{U}}$ such that if $(x_1, \ldots, x_N) \in S_{X_1} \times \ldots \times S_{X_N}$ satisfies $\| \varPhi (x_1, \ldots, x_N)\| > 1 - \eta(\e, \|\varPhi\|_{\mathcal{U}})$, then there exist $\varPsi \in \mathcal{U}$ with $\|\varPsi\| = 1$ and $(a_1, \ldots, a_N) \in S_{X_1} \times \ldots \times S_{X_N}$ such that $\| \varPsi(a_1, \ldots, a_N)\| = 1$, $\| (a_1, \ldots, a_N) - (x_1, \ldots, x_N)\| < \e$ and $\| \varPsi - \varPhi\|_{\mathcal{U}} < \e$. They proved that if $X_1, \ldots, X_N$ are uniformly convex Banach spaces then $\mathcal{U}$ has the weak BPBp for ideals of multilinear mappings for all Banach space $Y$. Here we will work on a different context where $\eta( \ . \ )$ depends of a fixed operator not of the norm of the operators ideal as we may see in the following definition.

\begin{defs} \label{sBPBp} A pair of Banach spaces $(X, Y)$ has the \emph{strong Bishop-Phelps-Bollob\'as property} (or sBPBp for short) if given $\e > 0$ and $T \in S_{\mathcal{L}(X, Y)}$, there exists $\eta(\e, T) > 0$ such that whenever $x_0 \in S_X$ satisfies
\begin{equation*}
\|T(x_0)\| > 1 - \eta(\e, T),
\end{equation*}
there is $x_1 \in S_X$ such that
\begin{equation*}
\|T(x_1)\| = 1 \ \ \ \mbox{and} \ \ \ \|x_1 - x_0\| < \e.
\end{equation*}
If $T \in \mathcal{L}(X, Y)$ is compact and the pair $(X, Y)$ has the sBPBp, then we say that the pair $(X, Y)$ has the sBPBp for compact operators.
\end{defs}

In the next proposition, we assume that the domain space $X$ is finite dimensional to get the sBPBp first result. In fact, as we mentianed in the previous paragraph, we get a positive real number $\eta(\e, T) > 0$ that depends of $\e > 0$ and also of the operator $T \in S_{\mathcal{L}(X, Y)}$ fixed at the beggining of the proof.

\begin{theorem} Let $X$ be a finite dimensional Banach space. Then the pair $(X, Y)$ has the sBPBp for all Banach space $Y$.
\end{theorem}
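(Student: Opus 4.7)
The plan is to use a direct compactness argument, exploiting that in a finite dimensional space $X$ the unit sphere $S_X$ is compact. Since $T$ is fixed throughout (the value $\eta$ is allowed to depend on $T$), the problem reduces to showing that vectors on which $T$ nearly attains its norm must be close to the (nonempty) set of vectors on which $T$ actually attains it.

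First I would fix $T \in S_{\mathcal{L}(X,Y)}$ and $\e > 0$, and introduce the norm-attaining set
\begin{equation*}
M := \{x \in S_X : \|T(x)\| = 1\}.
\end{equation*}
Since the map $x \mapsto \|T(x)\|$ is continuous on the compact set $S_X$, it attains its supremum $\|T\| = 1$, so $M$ is nonempty; and since $M$ is the preimage of $\{1\}$, it is closed in $S_X$, hence compact. Next I would consider the "bad" set
\begin{equation*}
N := \{x \in S_X : \dist(x, M) \geq \e\},
\end{equation*}
which is closed in $S_X$ and therefore compact. If $N = \emptyset$, any $\eta(\e, T) \in (0,1]$ works. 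Otherwise, the continuous function $x \mapsto 1 - \|T(x)\|$ attains its minimum on $N$ at some point $x^* \in N$; since $x^* \notin M$, this minimum is strictly positive, and I define $\eta(\e, T)$ to be this minimum value.

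With this choice, if $x_0 \in S_X$ satisfies $\|T(x_0)\| > 1 - \eta(\e, T)$, then $1 - \|T(x_0)\| < \eta(\e, T)$, which forces $x_0 \notin N$, i.e. $\dist(x_0, M) < \e$. Using compactness of $M$ once more, the distance is realized by some $x_1 \in M$ with $\|x_1 - x_0\| < \e$, and this $x_1$ satisfies $\|T(x_1)\| = 1$ by the definition of $M$, finishing the proof.

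There is no real obstacle here: the whole argument is a routine compactness/continuity chase, and essentially the only place finite dimensionality is used is to guarantee that $S_X$ is compact (giving both the existence of norm-attaining vectors and the uniform lower bound on $1 - \|T(x)\|$ away from $M$). The proof would break without compactness, since in infinite dimensions one can construct almost-norm-attaining sequences that escape from every norm-attaining vector, which foreshadows the negative results the author announces for the uniform (BPBp) version.
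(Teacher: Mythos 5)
Your proof is correct and rests on exactly the same ingredient as the paper's: compactness of $S_X$ in finite dimensions together with continuity of $x \mapsto \|T(x)\|$, so that near-norm-attaining points cannot stay $\e$-far from the (nonempty, compact) norm-attaining set. The only difference is presentational: the paper argues by contradiction, taking a sequence $(x_n)$ with $\|T(x_n)\| > 1 - \tfrac{1}{n}$ but $\dist(x_n, NA(T)) \geq \e_0$ and extracting a convergent subsequence, whereas you give the direct (contrapositive) version, which has the minor advantage of exhibiting an explicit $\eta(\e, T)$ as the minimum of $1 - \|T(x)\|$ over the bad set $N$.
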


\begin{proof} The proof is by contradiction. Let $T \in S_{\mathcal{L}(X, Y)}$. If the result is false for some $\e_0 > 0$, then for all $n \in \N$, there exists $x_n \in S_X$ such that
\begin{equation*}
\|T(x_n)\| > 1 - \frac{1}{n}
\end{equation*}
but $\dist (x_n, NA(T)) \geq \e_0$, where $NA(T) = \{ z \in S_X: \|T(z)\| = 1 \}$ for $n \in \N$. Since $X$ is finite dimensional, there exists a subsequence $(x_{n_k})$ of $(x_n)$ such that $x_{n_k} \longrightarrow x_0$ for some $x_0 \in X$. This implies that $\|T(x_{n_k})\| \longrightarrow \|T(x_0)\|$ and since
\begin{equation*}
1 \geq \|T(x_{n_k})\| \geq 1 - \frac{1}{n}
\end{equation*}
we get that $\|T(x_0)\| = \|x_0\| = 1$ and so $x_0 \in NA(T)$. Then $NA(T) \not= \emptyset$ and
\begin{equation*}
\e \leq \dist(x_{n_k}, NA(T)) \leq \|x_{n_k} - x_0\| \stackrel{k \to \infty}{\longrightarrow}        0
\end{equation*}
which is a contradiction.
\end{proof}

If we assume that $X$ is uniformly convex and the fixed bounded linear operator $T: X \longrightarrow Y$ is compact, we get that the pair $(X, Y)$ has the sBPBp as we may see in the next theorem.

\begin{theorem} \label{sBPBp2} Let $X$ be a uniformly convex Banach space. Then the pair $(X, Y)$ has the sBPBp for compact operators for all Banach space $Y$.
\end{theorem}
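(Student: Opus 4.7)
The plan is to run essentially the same contradiction argument as in the finite dimensional case, but to use compactness of $T$ together with uniform convexity of $X$ to upgrade weak convergence to norm convergence.

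More precisely, fix $T \in S_{\mathcal{L}(X,Y)}$ compact and suppose the sBPBp fails for $T$. Then there exist $\e_0 > 0$ and a sequence $(x_n) \subset S_X$ with $\|T(x_n)\| > 1 - 1/n$ and $\dist(x_n, NA(T)) \geq \e_0$, where $NA(T) = \{z \in S_X : \|T(z)\| = 1\}$. Since $X$ is uniformly convex, $X$ is reflexive, so $(x_n)$ has a subsequence $(x_{n_k})$ that converges weakly to some $x_0 \in B_X$.

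Now I would invoke compactness of $T$: passing to a further subsequence if necessary, $T(x_{n_k})$ converges in norm, and the limit must be $T(x_0)$ because weak convergence of $(x_{n_k})$ already forces $T(x_{n_k}) \fraca T(x_0)$. Together with $\|T(x_{n_k})\| \to 1$, this gives $\|T(x_0)\| = 1$, hence $\|x_0\| = 1$ and $x_0 \in NA(T)$, so $NA(T) \neq \emptyset$. The decisive step is to show $x_{n_k} \to x_0$ in norm. For this, I would use the Radon--Riesz (Kadec--Klee) property of uniformly convex spaces: $\|x_{n_k}\| = 1 = \|x_0\|$ and $x_{n_k} \fraca x_0$ imply $\|x_{n_k} - x_0\| \to 0$. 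If one prefers to avoid citing Radon--Riesz, the same conclusion follows directly from the definition of uniform convexity applied to $\tfrac{1}{2}(x_{n_k} + x_0)$, whose weak limit is $x_0$ of norm one, which forces $\|\tfrac{1}{2}(x_{n_k} + x_0)\| \to 1$ and hence $\|x_{n_k} - x_0\| \to 0$ by the modulus of convexity. This produces the contradiction $\e_0 \leq \dist(x_{n_k}, NA(T)) \leq \|x_{n_k} - x_0\| \to 0$.

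The main obstacle is precisely the point where the finite dimensional argument used norm compactness of $B_X$ for free; here neither reflexivity nor compactness of $T$ alone is enough, and it is essential to combine (i) compactness of $T$ to guarantee $\|T(x_0)\| = 1$ (so that $\|x_0\| = 1$) with (ii) uniform convexity to pass from weak to norm convergence of $(x_{n_k})$. Without (i) the weak limit $x_0$ could have norm strictly less than one, so $NA(T)$ might still be avoided; without (ii) one only gets $x_{n_k} \fraca x_0$, which is insufficient against the distance hypothesis.
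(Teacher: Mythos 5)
Your proof is correct and follows essentially the same path as the paper's: argue by contradiction, use uniform convexity to get reflexivity and a weakly convergent subsequence, use compactness of $T$ to show the weak limit $x_0$ satisfies $\|T(x_0)\|=\|x_0\|=1$ (so $NA(T)\neq\emptyset$), and then use uniform convexity to upgrade weak to norm convergence, contradicting $\dist(x_{n_k},NA(T))\geq\e_0$. The only (harmless) difference is the last step: the paper gets $\left\|\tfrac{1}{2}(x_{n_k}+x_0)\right\|\to 1$ through the operator, via $\left\|\tfrac{1}{2}(x_{n_k}+x_0)\right\|\geq\left\|\tfrac{1}{2}(T(x_{n_k})+T(x_0))\right\|\to\|T(x_0)\|=1$, whereas you invoke the Radon--Riesz property (equivalently, weak lower semicontinuity of the norm applied to the midpoints), which is equally valid.
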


\begin{proof} The proof is again by contradiction. Let $T \in S_{\mathcal{L}(X, Y)}$ be a compact operator. If the result is false, for some $\e_0 > 0$ and for all $n \in \N$, there exists $x_n \in S_X$ such that
	\begin{equation*}
	1 \geq \|T(x_n)\| > 1 - \frac{1}{n}
	\end{equation*}
but $\dist (x_n, NA(T)) \geq \e_0$ for all $n \in \N$. Then $\|T(x_n)\| \longrightarrow 1$ as $n \to \infty$. Since $X$ is uniformly convex, $X$ reflexive and then by the Smulian theorem there exists a subsequence $(x_{n_k})$ of $(x_n)$ and $x_0 \in X$ such that $(x_{n_k})$ converges weakly to $x_0$. Since $T$ is completely continuous, $T(x_{n_k})$ converges in norm to $T(x_0)$ as $k \longrightarrow \infty$. Therefore $\|T(x_0)\| =  \|x_0\| = 1$ and so $x_0 \in NA(T)$. Thus $NA(T) \not= \emptyset$ and
\begin{equation*}
1 \geq \left\| \frac{x_n + x_0}{2} \right\| \geq \left\| \frac{T(x_n) + T(x_0)}{2} \right\| \stackrel{k \to \infty}{\longrightarrow} \|T(x_0)\| = 1.
\end{equation*}
This implies that $\lim_{k \to \infty} \|x_{n_k} + x_0\| = 2$ and, using again that $X$ is uniformly convex, we get that $$\lim_{k \to \infty} \|x_{n_k} - x_0\| = 0 ,$$ which is a contradiction because of the following inequalities:
\begin{equation*}
\e_0 \leq \dist (x_{n_k}, NA(T)) \leq \|x_n - x_0\| \stackrel{k \to \infty}{\longrightarrow} 0.
\end{equation*}

\end{proof}

Consequently we get two more positives results about the sBPBp. In the next corollary we prove that the pair $(X, Y)$ has the sBPBp whenever $X$ is uniformly convex and $Y$ has the Schur's property which implies as a particular case that the pair $(\ell_2, \ell_1)$ satisfies the property. Corollary \ref{sBPBp3} shows that whenever $X$ is uniformly convex and $Y$ has finite dimension, the pair $(X, Y)$ has the sBPBp by using the fact that every bounded linear operator with finite dimensional range is compact.

\begin{cor} \label{sBPBp6} If $X$ is a uniformly convex Banach space and $Y$ is a Banach space with the Schur's property, then the pair $(X, Y)$ has the sBPBp. In particular, $(\ell_2, \ell_1)$ has the sBPBp.
\end{cor}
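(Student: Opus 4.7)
The plan is to reduce this to Theorem \ref{sBPBp2} by showing that, under the hypotheses, every bounded linear operator from $X$ to $Y$ is automatically compact, so the ``for compact operators'' qualifier in that theorem becomes vacuous.

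First I would recall that a uniformly convex Banach space is reflexive (Milman--Pettis). Hence, for any $T \in \mathcal{L}(X,Y)$, the image $T(B_X)$ sits inside the weakly compact set $T(B_X)$ (since $B_X$ is weakly compact and $T$ is weak-to-weak continuous). Thus $\overline{T(B_X)}^{\,w}$ is weakly compact in $Y$. The second step invokes the Schur property of $Y$: weakly compact subsets of a Schur space are norm compact (since in a Schur space the weak and norm topologies agree on weakly compact sets by an Eberlein--Smulian plus sequential argument). Therefore $T(B_X)$ is relatively norm compact, i.e.\ $T$ is compact.

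With this in hand, every $T \in S_{\mathcal{L}(X,Y)}$ is compact, and Theorem \ref{sBPBp2} directly yields the sBPBp for the pair $(X,Y)$. For the particular case $(\ell_2, \ell_1)$, it suffices to recall that $\ell_2$ is uniformly convex (as $2 \in (1,\infty)$) and that $\ell_1$ has the Schur property (a classical fact).

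The argument is essentially a two-line reduction; the only delicate ingredient is the passage from weak compactness of $\overline{T(B_X)}^{\,w}$ to norm compactness, which is where the Schur hypothesis is used. I do not anticipate any genuine obstacle beyond quoting Milman--Pettis and the standard characterization of the Schur property.
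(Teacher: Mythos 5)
Your proposal is correct and follows essentially the same route as the paper: reduce to Theorem \ref{sBPBp2} by showing that every bounded linear operator from the reflexive space $X$ into the Schur space $Y$ is compact, and then quote that $\ell_2$ is uniformly convex and $\ell_1$ has the Schur property. The only cosmetic difference is that you phrase the compactness step topologically (weak compactness of $T(B_X)$ plus the fact that weakly compact subsets of a Schur space are norm compact), while the paper argues sequentially via the Šmulian theorem and weak-to-weak continuity; both arguments rest on the same two ingredients.
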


\begin{proof} We apply Theorem \ref{sBPBp2}. To do this, we prove that every bounded linear operator $T: X \longrightarrow Y$
is compact. Indeed, since $T$ is continuous, $T$ is $w$-$w$ continuous. Let $(x_n)_n \subset B_X$. Since $X$ is reflexive, by the Smulian theorem, there are a subsequence of $(x_n)_n$ (which we denote again by $(x_n)_n$) and $x_0 \in X$ such that $x_n \stackrel{w}{\longrightarrow} x_0$. So $T(x_n) \stackrel{w}{\longrightarrow} T(x_0)$. Now, since $Y$ has the Schur's property, $T(x_n) \longrightarrow T(x_0)$ in norm. So $T$ is compact. By Theorem \ref{sBPBp2} the pair $(X, Y)$ has the sBPBp.
\end{proof}

\begin{cor} \label{sBPBp3} If $X$ is a uniformly convex Banach space and $Y$ is a finite dimensional Banach space, then the pair $(X, Y)$ has the sBPBp.
\end{cor}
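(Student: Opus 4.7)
The plan is to derive this corollary directly from Theorem \ref{sBPBp2}, exactly as the hint in the statement of Corollary \ref{sBPBp6} suggests. Theorem \ref{sBPBp2} guarantees the sBPBp for compact operators whenever the domain $X$ is uniformly convex, so the only thing I need to verify is that, when $Y$ is finite dimensional, \emph{every} bounded linear operator $T \in \mathcal{L}(X,Y)$ is automatically compact.

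For this compactness step, I would simply observe that $T(B_X)$ is a bounded subset of the finite dimensional Banach space $Y$. By the Heine--Borel theorem for finite dimensional normed spaces, bounded sets are relatively compact, hence $\overline{T(B_X)}$ is compact in $Y$. This shows $T$ is a compact operator.

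With compactness in hand, I would invoke Theorem \ref{sBPBp2} verbatim: given $\e > 0$ and $T \in S_{\mathcal{L}(X,Y)}$, the theorem produces a modulus $\eta(\e,T) > 0$ such that any $x_0 \in S_X$ with $\|T(x_0)\| > 1 - \eta(\e,T)$ admits $x_1 \in S_X$ with $\|T(x_1)\| = 1$ and $\|x_1 - x_0\| < \e$. Since this holds for every $T \in S_{\mathcal{L}(X,Y)}$ (because all such $T$ are compact by the previous paragraph), the pair $(X,Y)$ has the sBPBp by Definition \ref{sBPBp}.

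There is no real obstacle here; the corollary is a two-line consequence once Theorem \ref{sBPBp2} is available. The only thing worth noting is that one could alternatively cite Corollary \ref{sBPBp6}, since every finite dimensional Banach space trivially has the Schur property, but appealing directly to Theorem \ref{sBPBp2} via the Heine--Borel observation is slightly more self-contained.
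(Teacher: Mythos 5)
Your proof is correct and follows the same route as the paper: the paper derives Corollary \ref{sBPBp3} precisely by noting that every bounded linear operator with finite dimensional range is compact and then invoking Theorem \ref{sBPBp2}. Your Heine--Borel justification of the compactness step is exactly the intended argument.
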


\begin{rem} \label{sBPBp5} Note that in Definition \ref{sBPBp} the operator $T: X \longrightarrow Y$ must attains its norm if the pair $(X, Y)$ has the sBPBp. So if $X$ is not reflexive, then the pair $(X, Y)$ fails the sBPBp for all Banach space $Y$. Indeed, since $X$ is not reflexive, by the James theorem, there is a linear continuous functional $x_0^* \in S_{X^*}$ such that $|x_0^*(x)| < 1$ for all $x \in S_X$. Let $y_0 \in S_Y$ and define $T: X \longrightarrow Y$ by $T(x) := x_0^*(x) y_0$. Then $\|T\| = \|x_0^*\| = 1$ and $\|T(x)\| = |x_0^*(x)| < 1$ for all $x \in S_X$. This implies that $T$ never attains its norm and then the pair $(X, Y)$ can not have the sBPBp.
\end{rem}

Note that in the classic definition of the Bishop-Phelps-Bollob\'as property the number $\eta( \ . \ )$ depends only of $\e > 0$. So what happen if we ask for more in the definition of the strong Bishop-Phelps-Bollob\'as property? As we will see below, when we put $\eta(\ . \ )$ to depends only of $\e > 0$ we get negative results. However, we use them to get examples of pairs of Banach spaces that fail the sBPBp when the domain space is reflexive or when the range space has infinite dimension. Just to help to make reference we put a name of it.

\begin{defs} We say that a pair of Banach spaces $(X, Y)$ has the \emph{uniform strong Bishop-Phelps-Bollob\'as property} (uniform sBPBp, for short) if given $\e > 0$, there exists $\eta(\e) > 0$ such that whenever $T \in S_{\mathcal{L}(X, Y)}$ and $x_0 \in S_X$ are such that
\begin{equation*}
\|T(x_0)\| > 1 - \eta(\e),
\end{equation*}
there is $x_1 \in S_X$ such that
\begin{equation*}
\|T(x_1)\| = 1 \ \ \ \mbox{and} \ \ \ \|x_1 - x_0\| < \e.
\end{equation*}
\end{defs}

We observe that the Kim-Lee theorem says that a Banach space $X$ is uniformly convex if and only if the pair $(X, \K)$ has the uniform sBPBp where $\K = \R$ or $\C$. Note also that if the pair $(X, Y)$ satisfies the uniform sBPBp then the pair $(X, Y)$ satisfies the BPBp. The first thing that we notice is that if $(X, Y)$ has the uniform sBPBp for some Banach space $Y$, then $X$ must be uniformly convex.

\begin{prop} Let $X$ be a Banach space. If there exists a Banach space $Y$ such that the pair $(X, Y)$ has the uniform sBPBp, then the pair $(X, \K)$ has the uniform sBPBp.	
\end{prop}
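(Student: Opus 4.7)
The plan is to reduce the scalar-valued case to the operator-valued case by a rank-one embedding. Given $\varepsilon > 0$, let $\eta(\varepsilon)$ be the positive number witnessing the uniform sBPBp for the pair $(X, Y)$. I claim the same $\eta(\varepsilon)$ works for $(X, \K)$.

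To verify this, fix any $x^* \in S_{X^*}$ and $x_0 \in S_X$ with $|x^*(x_0)| > 1 - \eta(\varepsilon)$. Pick a norm-one vector $y_0 \in S_Y$ (which exists since $Y$ is a Banach space witnessing the uniform sBPBp, hence is in particular nonzero) and define the rank-one operator
\begin{equation*}
T \colon X \longrightarrow Y, \qquad T(x) := x^*(x)\, y_0.
\end{equation*}
Then $\|T\| = \|x^*\| \cdot \|y_0\| = 1$, so $T \in S_{\mathcal{L}(X,Y)}$, and
\begin{equation*}
\|T(x_0)\| = |x^*(x_0)| \cdot \|y_0\| = |x^*(x_0)| > 1 - \eta(\varepsilon).
\end{equation*}

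Next I would apply the uniform sBPBp of $(X, Y)$ to $T$ and $x_0$ to produce $x_1 \in S_X$ such that $\|T(x_1)\| = 1$ and $\|x_1 - x_0\| < \varepsilon$. Since $\|T(x_1)\| = |x^*(x_1)|$, this immediately yields $|x^*(x_1)| = 1$ and $\|x_1 - x_0\| < \varepsilon$, establishing the uniform sBPBp for $(X, \K)$ with the same modulus $\eta$. There is no real obstacle here; the only point to notice is that the uniform sBPBp is preserved under composing with an isometric embedding of $\K$ into $Y$, which is exactly what the rank-one operator $x \mapsto x^*(x) y_0$ encodes.
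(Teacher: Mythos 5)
Your proposal is correct and is essentially identical to the paper's own proof: both use the same modulus $\eta(\varepsilon)$ from the pair $(X,Y)$ and the rank-one operator $T(x) = x^*(x)y_0$ with $y_0 \in S_Y$ to transfer the property back to $(X,\K)$. No gaps.
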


\begin{proof} Given $\e \in (0, 1)$, consider $\eta(\e) > 0$ the positive real number that satisfies the uniform sBPBp for the pair $(X, Y)$. We prove that the pair $(X, \K)$ has the uniform sBPBp with the same $\eta(\e)$. Indeed, let $x^* \in S_{X^*}$ and $x_0 \in S_X$ be such that
\begin{equation*}
|x^*(x_0)| > 1 - \eta(\e).
\end{equation*}
Let $y_0 \in S_Y$ and define $T \in \mathcal{L}(X, Y)$ by $T(x) := x^*(x) y_0$ for all $x \in X$. So $\|T\| = \|x^*\| = 1$ and
\begin{equation*}
\|T(x_0)\| = |x^*(x_0)| > 1 - \eta(\e).
\end{equation*}
Since the pair $(X, Y)$ has the uniform sBPBp with $\eta(\e)$, there exists $x_1 \in S_X$ such that $\|T(x_1)\| = 1$ and $\|x_0 - x_1\| < \e$. Since $\|T(x_1)\| = |x^*(x_1)|$ the proof is complete.
\end{proof}

By the Kim-Lee theorem, we have the following consequence.

\begin{cor} \label{CorsBPBp} Let $X$ be a Banach space. If there exists a Banach space $Y$ such that the pair $(X, Y)$ has the uniform sBPBp, then $X$ is uniformly convex.
\end{cor}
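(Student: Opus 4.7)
The plan is to chain together the two ingredients that have just been put in place. By the Proposition immediately preceding the corollary, the hypothesis that $(X,Y)$ has the uniform sBPBp for some Banach space $Y$ already yields that the scalar-valued pair $(X,\K)$ has the uniform sBPBp. This is exactly the condition appearing on one side of the Kim-Lee characterization (Theorem \ref{Kim-Lee}).

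So the proof reduces to invoking Theorem \ref{Kim-Lee} in the direction: if $(X,\K)$ has the uniform sBPBp, then $X$ is uniformly convex. Concretely, I would write: assume $Y$ is a Banach space such that $(X,Y)$ has the uniform sBPBp; apply the previous Proposition to conclude that $(X,\K)$ has the uniform sBPBp; then apply Theorem \ref{Kim-Lee} to conclude that $X$ is uniformly convex.

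There is essentially no obstacle; the corollary is a one-line consequence of two previously stated results, and no additional estimate is needed. The only thing worth double-checking is that the $\eta(\e)$ produced in the Proposition does not lose any of the structure required by Kim-Lee (it does not, since the conclusion of the Proposition matches the hypothesis of Theorem \ref{Kim-Lee} verbatim, with the same $\e$ and the same $\eta(\e)$ dependence).
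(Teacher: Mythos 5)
Your proposal is correct and coincides with the paper's argument: the paper derives the corollary by combining the preceding Proposition (which passes from $(X,Y)$ to $(X,\K)$) with the Kim-Lee theorem. Nothing further is needed.
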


By this corollary, since $\ell_1^2$ is not uniformly convex, all the pairs $(\ell_1^2, Y)$ fail the uniform sBPBp for any Banach space $Y$.

What about the reciprocal of Corollary \ref{CorsBPBp}? The first thing that come to mind, since every Hilbert space is uniformly convex, is to assume that the domain space $X$ is a Hilbert space and try to find some Banach space $Y$ such that the pair $(X, Y)$ satisfies the property. But even in the simplest situation the result fails as we may see in the following example.

\begin{example} \label{exsBPBp} This example works for both real and complex cases. For a given $\e > 0$, suppose that there exists $\eta(\e) > 0$ satisfying the uniform sBPBp for the pair $(\ell_2^2, \ell_{\infty}^2)$.  Let $T: \ell_2^2 \to \ell_{\infty}^2$ be defined by
\begin{equation*}
T(x, y) := \left( \left(1 - \frac{1}{2} \eta(\e)\right)x, y \right)
\end{equation*}
for every $(x, y) \in \ell_2^2$. For every $(x, y) \in S_{\ell_2^2}$, we have
\begin{equation*}
\|T(x, y)\|_{\infty} = \left\| \left( \left(1 - \frac{1}{2} \eta(\e) \right) x, y \right) \right\|_{\infty} \leq 1.
\end{equation*}
Since $T(e_2) = 1$, we obtain $\|T\| = 1$. Moreover,
\begin{equation*}
\|T(e_1)\| = 1 - \frac{1}{2} \eta(\e) > 1 - \eta(\e).
\end{equation*}
We prove now that every $z = (a, b) \in S_{\ell_2^2}$ such that $\|T(z)\|_{\infty} = 1$ assumes the form $z = \lambda e_2$ for $|\lambda| = 1$. Indeed, since
$\left| 1 - \frac{1}{2} \eta(\e) \right| < 1$ and $\|T(z)\|_{\infty} = 1$, we have $|b| = 1$. Since $|a|^2 + |b|^2 = 1$, we have $a = 0$ and $b = \lambda$ with $|\lambda| = 1$. In summary, we have a unit operator $T$ and a unit vector $e_1$ satisfying $\|T(e_1)\| > 1 - \eta(\e)$ but if $T$ attains its norm at some point $z \in S_{\ell_2^2}$ then $z = (0, \lambda)$ with $|\lambda| = 1$. This contradicts the assumption that the pair $(\ell_2^2, \ell_{\infty}^2)$ has the uniform sBPBp since $z$ is far from $e_1$ in view of the fact that $\|e_1 - z\|_2 = \|(1, \lambda)\|_2 = \sqrt{2}$.
\end{example}

This shows that the pair $(\ell_2^2 (\K), \ell_{\infty}^2(\K))$ fails to have the uniform sBPBp for $\K = \R$ or $\C$. Now what if we add on the hypothesis that both $X$ and $Y$ are Hilbert spaces? The answer for this question is still no as we can see below.

\begin{example}  \label{sBPBp8} This example works for both real and complex cases. For a given $\e > 0$, suppose that there exists $\eta(\e) > 0$ satisfying the uniform sBPBp for the pair $(\ell_2^2, \ell_2^2)$. Define $T: \ell_2^2 \to \ell_2^2$ by
	\begin{equation*}
	T(x, y) := \left( \left(1 - \frac{\eta(\e)}{2} \right)x, y \right),
	\end{equation*}
	for every $(x, y) \in \ell_2^2$. Then for every $(x, y) \in S_{\ell_2^2}$,
	\begin{equation*}
	\|T(x, y)\|_2^2 =  \left( 1 - \frac{\eta(\e)}{2} \right)^2 |x|^2 + |y|^2  \leq |x|^2 + |y|^2 = \|(x, y)\|_2 = 1
	\end{equation*}
	and since $\|T(e_2)\|_2 = \|e_2\|_2 = 1$, we have $\|T\| = 1$. Also, we see that $\|T(e_1)\|_2 = 1 - \frac{\eta(\e)}{2} > 1 - \eta(\e)$. Now, if $z = (a, b) \in S_{\ell_2^2}$ is such that $\|T(z)\|_2 = 1$, then
	\begin{equation*}
	\left( 1 - \frac{\eta(\e)}{2} \right)^2 |a|^2 + |b|^2 = 1
	\end{equation*}
	and since $|b|^2 = 1 - |a|^2$, we get
	\begin{equation*}
	\eta (\e) \left( \frac{\eta(\e)}{4} - 1 \right) |a|^2 = 0
	\end{equation*}
	which implies that $a = 0$ and using again that $|a|^2 + |b|^2 = 1$, we obtain $b = \lambda e_2$ with $|\lambda| = 1$ and so $\|e_1 - z\|_2 = \sqrt{2}$ which contradicts the hypothesis that the pair $(\ell_2^2, \ell_2^2)$ has the uniform sBPBp.
\end{example}

More in general, we have the following positive result.

\begin{prop}\label{ParaMiguel1} Let $\ell_p^2$ and $\ell_q^2$ be the space $\K^2$ endowed with the $p$-norm and the $q$-norm, respectively, with $1 < p \leq q < \infty$ (or $p < q = \infty$). Given $\beta \in (0, 1)$, there exists a bounded linear operator $T_{\beta}: \ell_p^2 \rightarrow \ell_q^2$ with $\|T_{\beta}\| = 1$ such that
	\begin{itemize}
		\item[(i)] $\|T_{\beta} (e_1)\|_q = \beta$ and
		\item[(ii)] for every $z \in S_{\ell_p^2}$ such that $\|T_{\beta} (z)\|_q = 1$, we have $\|z - e_1\|_p = 2^{\frac{1}{p}}.$
	\end{itemize}
\end{prop}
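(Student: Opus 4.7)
The candidate operator is essentially the same one that appeared in Examples \ref{exsBPBp} and \ref{sBPBp8}: I would define $T_\beta : \ell_p^2 \to \ell_q^2$ by
\begin{equation*}
T_\beta(x,y) := (\beta x, y), \qquad (x,y) \in \ell_p^2.
\end{equation*}
Item (i) is then immediate: $T_\beta(e_1) = (\beta, 0)$, so $\|T_\beta(e_1)\|_q = \beta$.

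To check that $\|T_\beta\| = 1$, I would use that since $1 < p \leq q \leq \infty$ and any $(x,y) \in S_{\ell_p^2}$ satisfies $|x|, |y| \in [0,1]$, one has $t^q \leq t^p$ for $t \in [0,1]$ (interpreting this as $t \leq 1$ when $q = \infty$). In the case $q < \infty$, this gives
\begin{equation*}
\|T_\beta(x,y)\|_q^q = \beta^q|x|^q + |y|^q \leq |x|^q + |y|^q \leq |x|^p + |y|^p = 1,
\end{equation*}
while for $q = \infty$ one directly has $\|T_\beta(x,y)\|_\infty = \max(\beta|x|, |y|) \leq 1$. Together with $\|T_\beta(e_2)\|_q = 1$, this yields $\|T_\beta\| = 1$.

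For item (ii), the idea is to trace the equality case in the chain of inequalities above. Suppose $z = (a,b) \in S_{\ell_p^2}$ satisfies $\|T_\beta(z)\|_q = 1$. If $q < \infty$, then equality forces $\beta^q|a|^q = |a|^q$ and $|a|^q + |b|^q = |a|^p + |b|^p$; the first equality gives $a = 0$ (since $\beta < 1$), and combined with $|b|^p = 1$ we obtain $|b| = 1$. If $q = \infty$, the constraint $\max(\beta|a|, |b|) = 1$ together with $\beta|a| < |a| \leq 1$ forces $|b| = 1$, hence $a = 0$. In either case $z = \lambda e_2$ for some $|\lambda| = 1$, so
\begin{equation*}
\|z - e_1\|_p = \|(-1, \lambda)\|_p = (1 + 1)^{1/p} = 2^{1/p},
\end{equation*}
which is exactly (ii). No step here is really a serious obstacle; the only place demanding care is keeping track of the cases $q < \infty$ versus $q = \infty$ when extracting the equality conditions.
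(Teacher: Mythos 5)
Your proof is correct and follows essentially the same route as the paper: the same operator $T_\beta(x,y)=(\beta x,y)$, with the norm estimate coming from $t^q\le t^p$ on $[0,1]$ and the norm-attainment analysis coming from tracing equality (the paper phrases this by subtracting the two equalities $\beta^q|a|^q+|b|^q=1$ and $|a|^p+|b|^p=1$, which is the same computation). A small bonus of your write-up is that you treat the case $p<q=\infty$ explicitly, which the statement includes but the paper's proof only handles for $q<\infty$.
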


\begin{proof} Let $\beta \in (0, 1)$ and $1 < p \leq q < \infty$. Define $T_{\beta}: \ell_p^2 \rightarrow \ell_q^2$ by $T_{\beta} (x, y) := (\beta x, y)$ 	for every $(x, y) \in \ell_p^2$. If $\|(x, y)\|_p = 1$, since $p \leq q$, we get
\begin{equation*}
\|T_{\beta} (x, y)\|_q = (\beta^q |x|^q + |y|^q)^{\frac{1}{q}} < (|x|^p + |y|^p)^{\frac{1}{q}} = 1,
\end{equation*}
which implies that $\|T_{\beta}\| \leq 1$. Since $\|T_{\beta} (e_2)\|_q = \|e_2\|_q = 1$, we have $\|T_{\beta}\| = 1$. Now, let $z = (a, b)\in S_{\ell_p^2}$ be such that $\|T_{\beta}(z)\|_q = 1$. We prove that $b = \lambda e_2$ with $|\lambda| = 1$. Indeed, the equality $\|T_{\beta}(a, b)\|_q = 1$ implies that $\beta^q |a|^q + |b|^q = 1$ and since $|a|^p + |b|^p = 1$, we do the difference between these two equalities to get
	\begin{equation*}
	(|a|^p - \beta^q |a|^q) + (|b|^p - |b|^q) = 0.
	\end{equation*}
	Since $p \leq q$ and $|a|, |b| \leq 1$, $|a|^p - \beta^q|a|^q \geq 0$ and $|b|^p - |b|^q \geq 0$. Because of the above equality, we get that $|a|^p - \beta^q |a|^q = 0 = |b|^p - |b|^q$. But $|a|^q \leq |a|^p$ which implies that
	\begin{equation*}
	0 = |a|^p - \beta^q |a|^q \geq (1 - \beta^q)|a|^p.
	\end{equation*}
	Thus $a = 0$ and then $b = \lambda e_2$ with $|\lambda| = 1$ as desired. So if $z \in S_{\ell_p^2}$ is such that $\|T_{\beta} (z)\|_q = 1$, then $\|z - e_1\|_p = 2^{\frac{1}{p}}$ which completes the proof.
\end{proof}

As a consequence of this last result, we get that  all the pairs $(\ell_p (\K), \ell_q (\K))$ fail the uniform sBPBp for $1 < p \leq q < \infty$ for $\K = \R$ or $\C$. Now we study the pair $(\ell_2^2, \ell_1^2)$ in the real case when we put the sum norm on the range space. Unfortunatly, we can construct a bounded linear operator to get the same contradiction as before.

\begin{prop} Let $\ell_2^2$ and $\ell_1^2$ be the  Banach spaces $\R^2$ endowed with the $2$-norm and the sum-norm, respectively. Given $\beta \in (0, 1)$, there exists a bounded linear operator $T_{\beta}: \ell_2^2 \to \ell_1^2$ with $\|T_{\beta}\| = 1$ such that
\begin{itemize}
\item[(i)] $\|T_{\beta}(e_1)\|_1 = \beta$ and
\item[(ii)] for every $z \in S_{\ell_2^2}$ such that $\|T_{\beta} (z)\|_1 = 1$ we have $\|z - e_1\|_1 = \sqrt{2}$.

\end{itemize}

\end{prop}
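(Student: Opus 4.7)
Since Proposition~\ref{ParaMiguel1} required $p\leq q$, its diagonal operator $(x,y)\mapsto(\beta x,y)$ does not apply here: with $p=2$ and $q=1$ it has $\ell_2^2\to\ell_1^2$ norm $\sqrt{1+\beta^{2}}>1$, so simply rescaling would spoil condition~(i). The plan is instead to use an off-diagonal ``rotated'' operator, and to exploit the elementary identity $|u+v|+|u-v|=2\max\{|u|,|v|\}$, which collapses an $\ell_1^2$-norm on the range into a pointwise maximum of two linear quantities.

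Concretely, I would define $T_\beta\colon\ell_2^2\to\ell_1^2$ by
\[
T_\beta(x,y):=\tfrac{1}{2}\bigl(\beta x+y,\ y-\beta x\bigr).
\]
Applying the identity above gives the closed form
\[
\|T_\beta(x,y)\|_1=\tfrac{1}{2}\bigl(|\beta x+y|+|\beta x-y|\bigr)=\max\{|y|,\beta|x|\},
\]
from which everything drops out in one line each.

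On $S_{\ell_2^2}$ one has $|x|,|y|\leq 1$ and $\beta<1$, so $\|T_\beta(x,y)\|_1\leq 1$, with equality attained at $\pm e_2$; hence $\|T_\beta\|=1$. Specializing to $(x,y)=(1,0)$ yields $\|T_\beta(e_1)\|_1=\max\{0,\beta\}=\beta$, which is (i). For (ii), suppose $z=(a,b)\in S_{\ell_2^2}$ satisfies $\|T_\beta(z)\|_1=1$; then $\max\{|b|,\beta|a|\}=1$, and since $\beta|a|\leq\beta<1$ we are forced to have $|b|=1$, hence $a=0$ and $z=\pm e_2$. The distance then equals $\sqrt{2}$, matching the value $2^{1/p}=\sqrt{2}$ obtained in the $p=2$ case of Proposition~\ref{ParaMiguel1}.

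The only real obstacle is spotting the correct off-diagonal operator; once the identity $|u+v|+|u-v|=2\max\{|u|,|v|\}$ is invoked, the operator norm, condition (i), and the uniqueness of the norm-attaining points $\pm e_2$ all follow immediately from the strict inequality $\beta<1$, with no further case analysis.
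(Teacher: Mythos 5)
Your proof is correct and essentially the same as the paper's: up to a harmless coordinate swap and sign your operator $\tfrac12(\beta x+y,\,y-\beta x)$ is the paper's $\tfrac12(\beta x-y,\,\beta x+y)$, and both arguments rest on the identity $|u+v|+|u-v|=2\max\{|u|,|v|\}$ to collapse the $\ell_1$-norm into $\max\{\beta|x|,|y|\}$ and then force $z=\pm e_2$ at norm attainment. Your use of the absolute-value form of the identity merely streamlines the paper's sign case analysis, and your conclusion $\|z-e_1\|_2=\sqrt{2}$ matches what the paper's proof actually establishes (the subscript $1$ in item (ii) of the statement is evidently a typo for the $2$-norm).
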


\begin{proof}
Let $\beta \in (0, 1)$. Define $T_{\beta}: \ell_2^2 \to \ell_1^2$ by
\begin{equation*}
T_{\beta} (x, y) := \left( \frac{ \beta x - y}{2}, \frac{\beta x + y}{2} \right),
\end{equation*}
for all $(x, y) \in \ell_2^2(\R)$. Then
\begin{equation*}
T_{\beta} (e_1) = \left( \frac{\beta}{2}, \frac{\beta}{2} \right) \ \ \mbox{and} \ \ T_{\beta} (e_2) = \left( - \frac{1}{2}, \frac{1}{2} \right).
\end{equation*}
So $\|T_{\beta} (e_1)\|_1 = \beta$ and $\|T_{\beta} (e	_2)\|_1 = 1$. Also, we have $\|T_{\beta}\| = 1$. Indeed, let $(x, y) \in S_{\ell_2^2}$. Then
\begin{equation*}
\|T_{\beta}(x, y)\|_1 = \left| \frac{\beta x - y}{2} \right| + \left| \frac{\beta x + y}{2} \right| = \frac{1}{2} |\beta x - y | + \frac{1}{2} | \beta x + y|.
\end{equation*}
Recall that for every $a, b \in \R$, we may write
$\max \{ a, b \} = \frac{1}{2}(a + b) + \frac{1}{2} |a - b|$. With this in mind, if $|\beta x + y| \geq 0$, then
\begin{equation*}
\|T_{\beta} (x , y) \|_1 = \frac{1}{2} (\beta x + y) + \frac{1}{2} | \beta x - y| = \max \{ \beta x , y \} \leq 1
\end{equation*}
and if $| \beta x + y| \leq 0$ likewise we have
\begin{equation*}
\|T_{\beta} (x, y)\|_1 = \frac{1}{2} [(- \beta x) + (-y)] + \frac{1}{2} |\beta x - y| = \max \{ -\beta x, -y \} \leq 1.
\end{equation*}
Since $\|T_{\beta} (e_2)\|_1 = 1$, we have $\|T_{\beta}\| = 1$. Now, suppose that $z = (a, b) \in S_{\ell_2^2}$ is such that $\|T_{\beta} (z)\|_1 = 1$. So,
\begin{equation*}
1 = \frac{1}{2} |\beta a - b| + \frac{1}{2}|\beta a + b|.
\end{equation*}
Again, we have two cases. If $|\beta a + b| \geq 0$, then $1 = \max \{ \beta a , b \}$ which implies $b = 1$ since $\beta < 1$, and if $|\beta a + b| \leq 0$, then $1 = \max \{ - \beta a, - b \}$ which implies $b = - 1$. Since $a^2 + b^2 = 1$, we obtain $a = 0$ and $b = \pm 1$. This means that if $T_{\beta}$ attains its norm at some $z \in S_{\ell_2^2 }$, then $\|e_1 - z\|_2 = \sqrt{2}$.
\end{proof}

In particular, we get that the pair $(\ell_2^2 (\R), \ell_1^2 (\R))$ fails the uniform sBPBp. Next we show that the pair $(\ell_2^2 (\R), \ell_q^2 (\R))$ for $1 \leq q < 2$ also fails this property.

\begin{prop}\label{ParaMiguel2} Let $\ell_2^2$ and $\ell_q^2$ be the space $\R^2$ endowed with the 2-norm and the $q$-norm, respectively, with $1 \leq q < 2$. Given $\beta \in (0, 1)$, there exists $T_{\beta}: \ell_2^2 \longrightarrow \ell_q^2$ with $\|T_{\beta}\| = 1$ such that
\begin{itemize}
\item[(i)] $\|T_{\beta} (e_1)\|_q = \beta$ and
\item[(ii)] for every $z \in \ell_2^2$ such that $\|T_{\beta} (z)\|_q = 1$ we have $\|z - e_1\|_2 = \sqrt{2}$.
\end{itemize}
\end{prop}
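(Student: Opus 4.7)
The plan is to adapt the construction of the preceding proposition (the $q=1$ case), replacing the factor $\tfrac{1}{2}$ by $2^{-1/q}$ so the normalisation works across the full range $q\in[1,2)$. Concretely, I would set
\[
T_{\beta}(x,y) := 2^{-1/q}\bigl(\beta x - y,\ \beta x + y\bigr), \qquad (x,y)\in\ell_2^2.
\]
Direct substitution gives $T_{\beta}(e_1) = 2^{-1/q}(\beta,\beta)$ and $T_{\beta}(e_2) = 2^{-1/q}(-1,1)$, hence $\|T_{\beta}(e_1)\|_q = \beta$ and $\|T_{\beta}(e_2)\|_q = 1$. Thus (i) will follow as soon as $\|T_{\beta}\|=1$ has been checked.

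To bound $\|T_{\beta}\|$ from above, I would apply the power-mean inequality
\[
|a|^q + |b|^q \leq 2^{1-q/2}(a^2+b^2)^{q/2},
\]
valid for $q\in[1,2]$ with equality iff $|a|=|b|$, to $a=\beta x - y$ and $b=\beta x + y$. Together with the parallelogram identity $(\beta x - y)^2 + (\beta x + y)^2 = 2(\beta^2 x^2 + y^2)$, this yields, for every $(x,y)\in S_{\ell_2^2}$,
\[
\|T_{\beta}(x,y)\|_q^q = \tfrac{1}{2}\bigl(|\beta x - y|^q + |\beta x + y|^q\bigr) \leq (\beta^2 x^2 + y^2)^{q/2} \leq (x^2+y^2)^{q/2} = 1.
\]
Combined with $\|T_{\beta}(e_2)\|_q = 1$ this gives $\|T_{\beta}\|=1$.

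For (ii) I would chase the equality cases through the above chain. The last inequality $\beta^2 x^2 + y^2 \leq x^2 + y^2$ is strict whenever $x\neq 0$, because $\beta<1$. Consequently, any $z=(a,b)\in S_{\ell_2^2}$ with $\|T_{\beta}(z)\|_q = 1$ must have $a=0$, which forces $b=\pm 1$ and $z=\pm e_2$; note that when $a=0$ the power-mean step reduces to the trivial equality $2|b|^q = 2|b|^q$, so no further condition is imposed. Finally $\|e_1 - (\pm e_2)\|_2 = \sqrt{2}$, which proves (ii).

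The only delicate point — and the step I would spell out carefully — is the equality analysis: it is the strict inequality $\beta^2 x^2 < x^2$ (rather than the power-mean inequality itself) that pins the norm-attaining directions down to $\pm e_2$. Once that observation is made, the argument is a straightforward generalisation of the $q=1$ case already treated in the paper.
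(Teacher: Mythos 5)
Your proof is correct, but it takes a genuinely different and more elementary route than the paper. The paper first studies the unperturbed operator $T(x,y)=2^{-1/q}(x-y,\,x+y)$: it parametrizes the arc of the unit circle in the first quadrant, checks the midpoint $\left(\tfrac{1}{\sqrt2},\tfrac{1}{\sqrt2}\right)$, and shows by a derivative computation ($F'>0$, $G'>0$ on $\left(\tfrac{1}{\sqrt2},1\right)$) that $\|T\|=1$ with norm attained only at $\pm e_1,\pm e_2$; the statement for $T_\beta(x,y)=2^{-1/q}(\beta x-y,\,\beta x+y)$ is then deduced from the inclusion $T_\beta(B_{\ell_2^2})\subset T(B_{\ell_2^2})$ together with $\|T_\beta(\pm e_1)\|_q=\beta<1$. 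You work with the same operator $T_\beta$ but replace all of that calculus by the two-point norm comparison $|a|^q+|b|^q\le 2^{1-q/2}(a^2+b^2)^{q/2}$ (valid for $1\le q\le 2$) combined with the parallelogram identity $(\beta x-y)^2+(\beta x+y)^2=2(\beta^2x^2+y^2)$, which gives $\|T_\beta(x,y)\|_q^q\le(\beta^2x^2+y^2)^{q/2}\le 1$ in one line; the equality analysis is then correctly driven by the strict inequality $\beta^2a^2<a^2$ for $a\ne 0$, which pins the norm-attaining points of $T_\beta$ down to $\pm e_2$ (implicitly, as in the paper, one reads condition (ii) for $z\in S_{\ell_2^2}$, i.e.\ for points where the norm is attained). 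Your argument is shorter, needs no figure or monotonicity computation, works uniformly up to and including $q=2$ (thus also recovering Example 2 of the paper after a rotation), and carries over verbatim to complex scalars; what it does not give, and the paper's argument does, is the sharper geometric fact that already the unperturbed $T$ attains its norm exactly at $\pm e_1,\pm e_2$, but that extra information is not needed for the proposition.
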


\begin{proof} Let $1 \leq q < 2$ and define $T: \ell_2^2 \longrightarrow \ell_q^2$ by
\begin{equation*}
T(x, y) := \left( \frac{x - y}{2^{\frac{1}{q}}}, \frac{x + y}{2^{\frac{1}{q}}} \right)
\end{equation*}
for every $(x, y) \in \ell_2^2$. First of all, note that
\begin{equation*}
\|T(e_2)\|_q^q = \left| - \frac{1}{2^{\frac{1}{q}}} \right|^q + \left( \frac{1}{2^{\frac{1}{q}}} \right)^q = \frac{1}{2} + \frac{1}{2} = 1,
\end{equation*}	
i.e., $\|T(e_2)\|_q = 1$. This shows that $\|T\| \geq 1$. Next, we show that $\|T\| \leq 1$ and also that the only points which $T$ attains its norm are at $\pm e_1$ and $\pm e_2$. To do so, we study the norm of the operator $T$ by using the following compact set:
\begin{equation*}
K:= \left\{ (a,b)\in \R^2: a^2 + b^2 \leq 1, \ a, b \geq 0 \right\}.
\end{equation*}
By symmetry, the norm of $T$ is the maximum of $\|T(z)\|$ with $z$ in $K$. Let  $z_0 = (a_0, b_0)$ a point of $K$ such that $T$ attains its norm at $z_0$, that is,  $\|T\| = \|T(z_0)\|$. We consider $K_1$ as the segment that connect $(0, 0)$ with $e_1$, $K_3$ as the segment that connect $(0, 0)$ with $e_2$ and $K_2$ as the arc that connect $e_1$ with $e_2$. See Figure \ref{fig:Maximum}.
	
\begin{figure}
	\centering
	\includegraphics[width=0.8\linewidth]{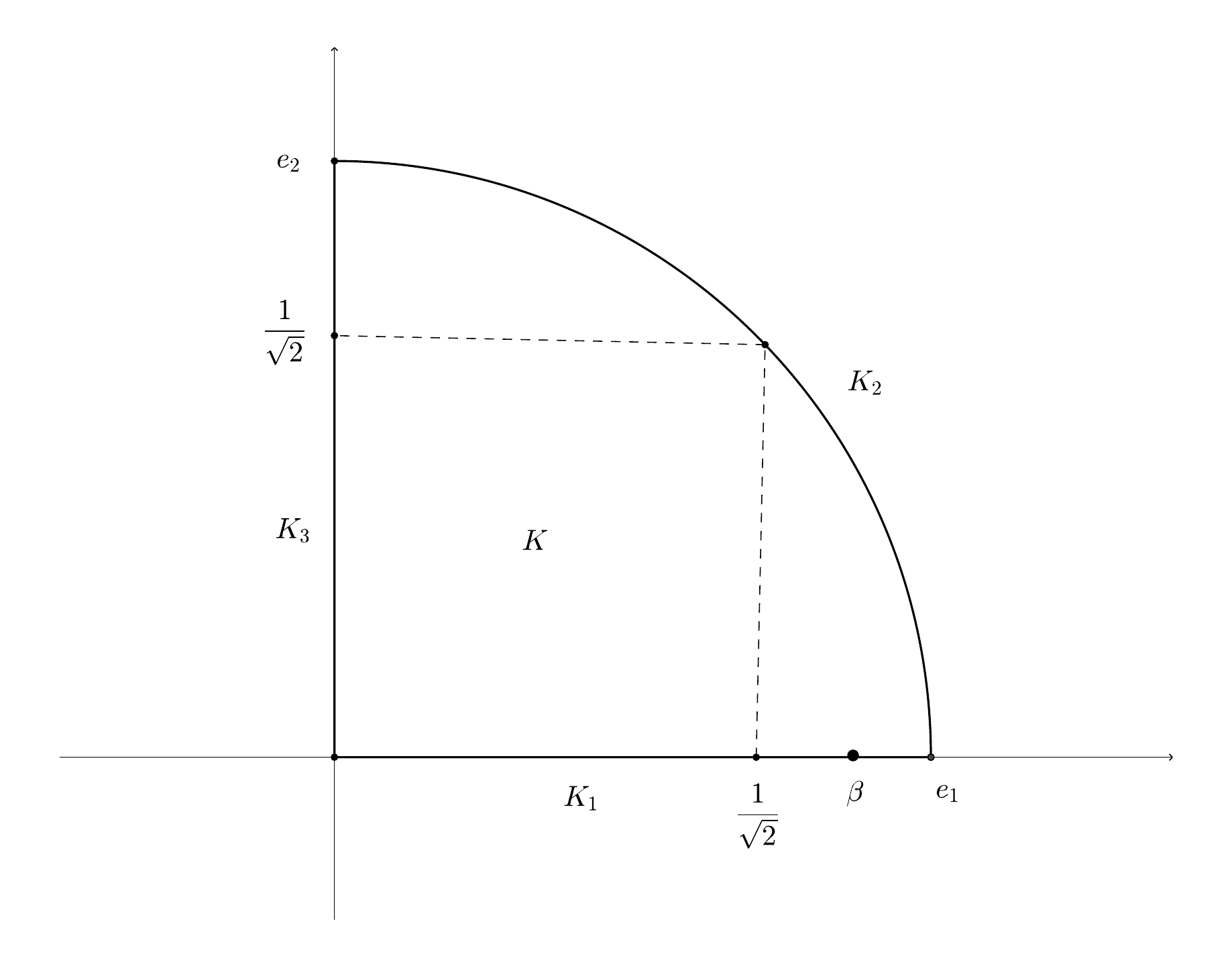}
	\caption{}
	\label{fig:Maximum}
\end{figure}
	
	
It is enough to study the values of $\|T(z)\|_q$ on the set $K_2 \setminus \{ e_1, e_2 \}$ since the operator $T$ attains its norm at elements of the sphere and $\|T(e_1)\|_q = \|T(e_2)\|_q = 1$. We have
$$K_2 \setminus \{ e_1, e_2 \}= \left\{ (x,  f(x)):x \in \left( \frac{1}{\sqrt{2}}, 1 \right) \right\} \cup \left\{ \left( \frac{1}{\sqrt{2}}, \frac{1}{\sqrt{2}} \right) \right\} \cup \left\{ (g(y),y): y \in \left( \frac{1}{\sqrt{2}}, 1 \right) \right\},$$
with $f: \left( \frac{1}{\sqrt{2}}, 1 \right) \to \R$ defined as $f(x)=(1 - x^2)^{\frac{1}{2}}$ and
$g: \left( \frac{1}{\sqrt{2}}, 1 \right) \to \R$ defined as $g(y)=(1 - y^2)^{\frac{1}{2}}$.
Since
\begin{equation*}
\left\| T \left( \frac{1}{\sqrt{2}}, \frac{1}{\sqrt{2}} \right) \right\|_q = \frac{2}{2^{\frac{1}{2} + \frac{1}{q}}} < 1
\end{equation*}
for every  $1 \leq q < 2$, then  $z_0\neq \left( \frac{1}{\sqrt{2}}, \frac{1}{\sqrt{2}} \right)$. (On the other hand observe that if $q = 2$, then $\left\|T(\frac{1}{\sqrt{2}}, \frac{1}{\sqrt{2}}) \right\|_q = 1$ and if $q > 2$, then $\left\|T(\frac{1}{\sqrt{2}}, \frac{1}{\sqrt{2}}) \right\|_q > 1$.) Thus if $z_0\in K_2 \setminus \{ e_1, e_2 \}$, then either $z_0 \in \left\{ (x,  f(x)):x \in \left( \frac{1}{\sqrt{2}}, 1 \right) \right\}$ and then $a_0$ would be a critical point of $F$ in $\left( \frac{1}{\sqrt{2}}, 1  \right)$,
where
$$F(x)=\|T(x,f(x))\|_q^q= \frac{1}{2} \left[ \left( x - (1 - x^2)^{\frac{1}{2}} \right)^q + \left( x + (1 - x^2)^{\frac{1}{2}}) \right)^q \right.
$$
or $z_0 \in \left\{ (g(y),y): y \in \left( \frac{1}{\sqrt{2}}, 1 \right) \right\}$ and in this case $b_0$ would be a critical point of $G$ in $\left( \frac{1}{\sqrt{2}}, 1 \right)$, where
\begin{equation*}
G(y) = \|T(g(y),y)\|_q^q=\frac{1}{2} \left[ \left(  y- (1 - y^2)^{\frac{1}{2}} \right)^q + \left( (1 - y^2)^{\frac{1}{2}} + y \right)^q \right].
\end{equation*}
But, as we will see in the next lines, these can not happen because $F'(x) > 0$  and $G'(y)>0$ for all $x,y \in \left( \frac{1}{\sqrt{2}}, 1 \right)$ and then $z_0\not\in K_2 \setminus \{ e_1, e_2 \}$. Indeed, we consider first the case that $x \in \left( \frac{1}{\sqrt{2}}, 1 \right)$
For every $x \in \left( \frac{1}{\sqrt{2}}, 1 \right)$, we get
\begin{equation*}
F'(x) = \frac{q}{2} \left[ \left( x - (1 - x^2)^{\frac{1}{2}} \right)^{q - 1} \left( 1 + \frac{x}{(1 - x^2)^{\frac{1}{2}}} \right) + \left( x + (1 - x^2)^{\frac{1}{2}} \right)^{q - 1} \left( 1 - \frac{x}{(1 - x^2)^{\frac{1}{2}}} \right) \right].
\end{equation*}
For $x \in \left( \frac{1}{\sqrt{2}}, 1 \right)$, we have that $\left( x - (1 - x^2)^{\frac{1}{2}} \right)^{q - 1} > 0$ and since $$\left( x + (1 - x^2)^{\frac{1}{2}} \right)^{q - 1} \geq \left( x - (1 - x^2)^{\frac{1}{2}} \right)^{q - 1}$$ for every $x$ on this interval, we obtain that
\begin{equation*}
F'(x) \geq \frac{q}{2} \left( x - (1 - x^2)^{\frac{1}{2}} \right)^{q - 1} \left( 1 + \frac{x}{(1 - x^2)^{\frac{1}{2}}} + 1 - \frac{x}{(1 - x^2)^{\frac{1}{2}}} \right) = q \left( x - (1 - x^2)^{\frac{1}{2}} \right)^{q - 1} > 0,
\end{equation*}
for every $x \in (\frac{1}{\sqrt{2}}, 1)$. A simply  change of the  letter $F$ by $G$ and $x$ by $y$ implies that  $G'(y)>0$ for every  $y \in \left( \frac{1}{\sqrt{2}}, 1 \right)$.
	
Everything we did so far was to prove that $\|T\|=1$  and that $T$ attains its norm on $K$ only at $z = e_1$ and $z = e_2$. Therefore, we may conclude that $T$ attains its maximum at $\pm e_1$ and at $\pm e_2$. In other words, we proved that $T(B_{\ell_2^2}) \cap S_{\ell_q^2} = \{ \pm e_1, \pm e_2 \}$.

Now, for $0 < \beta < 1$, define $T_{\beta}: \ell_2^2 \longrightarrow \ell_q^2$ by
\begin{equation*}
T_{\beta} (x, y) = \left( \frac{\beta x - y}{2^{\frac{1}{q}}}, \frac{\beta x + y}{2^{\frac{1}{q}}} \right),
\end{equation*}
for every $(x, y) \in \ell_2^2$. Note that $\|T_{\beta}(e_1)\|_q = \beta$ and $\|T_{\beta} (e_2)\|_q = 1$.  Since $T_{\beta} (B_{\ell_2^2}) \subset T(B_{\ell_2^2}) \subset B_{\ell_q^2}$, then $\|T_{\beta}\| \leq 1$. Also, using that $T(B_{\ell_2^2}) \cap S_{\ell_q^2} = \{ \pm e_1, \pm e_2 \}$ and that $\|T_{\beta} (\pm e_1)\|_q < 1$, we have that $\|T_{\beta}(\pm e_2)\|_q = 1$. This implies that if $z \in S_{\ell_2^2}$ is such that $\|T_{\beta} (z)\|_q = 1$, then $z = \pm e_2$ and therefore $\|e_1 - z\|_2 = \sqrt{2}$ as we wanted.


\end{proof}

As a consequence of Propositions \ref{ParaMiguel1} and \ref{ParaMiguel2} we have the following corollary.
\begin{cor} The pair $(\ell_2^2(\R), \ell_q^2(\R))$ fails the uniform sBPBp for every $1 \leq q \leq \infty$.
\end{cor}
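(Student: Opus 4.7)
The plan is to observe that Propositions \ref{ParaMiguel1} and \ref{ParaMiguel2} together cover the full range $1 \leq q \leq \infty$ once we specialize the domain to $p = 2$, and that both conclusions yield an obstruction of the same size $\sqrt{2}$ in the $\ell_2^2$-distance. So the corollary follows by a contradiction argument with a single, fixed choice of $\e$.

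First I would fix $\e_0 \in (0, \sqrt{2})$ once and for all (any value such as $\e_0 = 1$ works), and assume toward a contradiction that the pair $(\ell_2^2(\R), \ell_q^2(\R))$ has the uniform sBPBp. Then there exists $\eta_0 := \eta(\e_0) > 0$ witnessing the property at $\e_0$. I would then pick $\beta \in (1 - \eta_0, 1)$ (for instance $\beta = 1 - \eta_0/2$), so that the operator produced by the appropriate proposition will satisfy $\|T_\beta(e_1)\|_q = \beta > 1 - \eta_0$.

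The next step is to split into cases on $q$. If $2 \leq q \leq \infty$, I would apply Proposition \ref{ParaMiguel1} with $p = 2$ (the hypothesis $1 < p \leq q < \infty$ or $p < q = \infty$ is satisfied), obtaining $T_\beta \in S_{\mathcal{L}(\ell_2^2, \ell_q^2)}$ whose only norming points $z \in S_{\ell_2^2}$ satisfy $\|z - e_1\|_2 = 2^{1/2} = \sqrt{2}$. If $1 \leq q < 2$, I would instead apply Proposition \ref{ParaMiguel2}, which directly yields the same conclusion $\|z - e_1\|_2 = \sqrt{2}$ for every $z \in S_{\ell_2^2}$ with $\|T_\beta(z)\|_q = 1$.

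In either case, the operator $T_\beta$ contradicts the uniform sBPBp at $\e_0$: the unit vector $e_1$ satisfies $\|T_\beta(e_1)\|_q > 1 - \eta_0$, but every $z$ at which $T_\beta$ attains its norm lies at distance $\sqrt{2} > \e_0$ from $e_1$. This finishes the proof in a few lines. There is essentially no hard step here, since the two previous propositions already do the analytic work; the only thing to verify carefully is that setting $p = 2$ in Proposition \ref{ParaMiguel1} exactly matches the range $q \geq 2$ while Proposition \ref{ParaMiguel2} exactly covers $1 \leq q < 2$, so the two cases fit together without leaving a gap.
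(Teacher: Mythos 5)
Your proposal is correct and is essentially the paper's own argument: the corollary is stated there as an immediate consequence of Propositions \ref{ParaMiguel1} (with $p=2$, covering $2 \leq q \leq \infty$) and \ref{ParaMiguel2} (covering $1 \leq q < 2$), and your contradiction argument with $\beta = 1 - \eta(\e_0)/2$ is exactly the intended way to pass from those propositions to the failure of the uniform sBPBp.
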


What about the case that $1 < p \leq 2$ and $1 \leq q < 2$? We will study the real case of this right now. Consider $1 < p \leq 2$. Define $Id: \ell_p^2 (\R) \to \ell_2^2 (\R)$ by $Id (x, y) = (x, y)$ for every $(x, y) \in \ell_p^2 (\R)$. Then $Id (e_1) = e_1$ and $Id (e_2) = e_2$. Also, if $(x, y) \in S_{\ell_p^2 (\R)}$, then $|x|^p + |y|^p = 1$ and so $|x|, |y| \leq 1$. This implies that $|x|^2 \leq |x|^p$ and $|y|^2 \leq |y|^p$, and therefore
\begin{equation*}
\|Id (x, y)\|_2^2 = |x|^2 + |y|^2 \leq |x|^p +_|y|^p = 1.
\end{equation*}
Thus $\|Id \| = 1$. Given $0 < \beta < 1$, let $T_{\beta}: \ell_2^2 (\R) \longrightarrow \ell_q^2 (\R)$ be as in the Proposition \ref{ParaMiguel2} with $1 \leq q < 2$. Now, define $\widetilde{T}_{\beta}: \ell_p^2 (\R) \longrightarrow \ell_q^2 (\R)$ by $\widetilde{T}_{\beta} = T_{\beta} \circ Id$. Then $\|\widetilde{T}_{\beta}\| \leq \|T_{\beta}\| \|Id \| = 1$. Also, $\|\widetilde{T}_{\beta} (e_1)\|_q = \|(T_{\beta} \circ Id)(e_1)\|_q = \beta$ and $\| \widetilde{T}_{\beta}(e_2)\|_q = \| (T_{\beta} \circ Id)(e_2)\|_q = \|T_{\beta}(e_2)\|_q = 1$. Suppose that there exists $z \in S_{\ell_p^2 (\R)}$ such that $\|\widetilde{T}_{\beta} (z) \|_q = 1$. Then $\|T_\beta (z)\|_q = 1$ and then, as we can see in the proof of the Proposition \ref{ParaMiguel2}, $z$ must be equals to $e_2$ or $-e_2$. In both cases, we have that $\|e_1 - z\|_p = 2^{\frac{1}{p}}$. We just have proved the following result.

\begin{cor} The pair $(\ell_p^2 (\R), \ell_q^2 (\R))$ fails the uniform sBPBp for $1 < p \leq 2$ and $1 \leq q \leq 2$.
\end{cor}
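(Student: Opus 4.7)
The plan is to reduce the corollary to the two bad-operator constructions already at hand, Propositions \ref{ParaMiguel1} and \ref{ParaMiguel2}, via a norm-one pullback. The key device is the formal identity $Id: \ell_p^2(\R) \to \ell_2^2(\R)$ for $1 < p \leq 2$. Since any $(x,y) \in S_{\ell_p^2}$ has $|x|,|y| \leq 1$ and $p \leq 2$, we get $|x|^2 \leq |x|^p$ and $|y|^2 \leq |y|^p$, so $\|Id(x,y)\|_2^2 \leq \|(x,y)\|_p^p = 1$; with equality witnessed at $e_1, e_2$, so $\|Id\| = 1$ and $Id$ fixes the canonical basis.

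The case $q = 2$ is immediate from Proposition \ref{ParaMiguel1} applied with the pair $(p,2)$, since $1 < p \leq 2 < \infty$: for each $\beta \in (0,1)$ there is a unit operator $T_\beta: \ell_p^2(\R) \to \ell_2^2(\R)$ with $\|T_\beta(e_1)\|_2 = \beta$ whose norm-attaining unit vectors all lie at $\ell_p$-distance $2^{1/p}$ from $e_1$.

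For $1 \leq q < 2$, I would take the operator $T_\beta: \ell_2^2(\R) \to \ell_q^2(\R)$ from Proposition \ref{ParaMiguel2} and set $\widetilde{T}_\beta := T_\beta \circ Id$. Submultiplicativity gives $\|\widetilde{T}_\beta\| \leq \|T_\beta\|\|Id\| = 1$, with equality witnessed at $e_2$, and $\|\widetilde{T}_\beta(e_1)\|_q = \|T_\beta(e_1)\|_q = \beta$. The core verification is that the norm-attainment set of $\widetilde{T}_\beta$ on $S_{\ell_p^2}$ is still $\{\pm e_2\}$: if $\|T_\beta(Id(z))\|_q = 1$ for some $z \in S_{\ell_p^2}$, then $\|T_\beta\| = 1$ forces $\|Id(z)\|_2 \geq 1$, while $\|Id\| = 1$ forces $\|Id(z)\|_2 \leq 1$; hence $Id(z) \in S_{\ell_2^2}$, Proposition \ref{ParaMiguel2} then forces $Id(z) = \pm e_2$, and by injectivity $z = \pm e_2$, giving $\|e_1 - z\|_p = 2^{1/p}$. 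In either case, for $\varepsilon \in (0, 2^{1/p})$ and any candidate $\eta(\varepsilon) > 0$, choosing $\beta \in (0,1)$ with $\beta > 1 - \eta(\varepsilon)$ produces an operator witnessing the failure of the uniform sBPBp, since $\|\widetilde{T}_\beta(e_1)\|_q > 1 - \eta(\varepsilon)$ while no norm-attaining unit vector lies within $\varepsilon$ of $e_1$.

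The main (and really only) obstacle is verifying that the inclusion $Id$ does not degrade the sharp norm-attainment geometry of $T_\beta$; this is what the hypothesis $p \leq 2$ buys us, and it works precisely because the equalities $\|Id\| = \|T_\beta\| = 1$ leave no slack, so any norm-attaining $z$ of the composition is forced to be a norm-attaining point of $T_\beta$ sitting on the $\ell_2$-sphere.
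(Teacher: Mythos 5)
Your proposal is correct and follows essentially the same route as the paper: compose the operator $T_{\beta}$ of Proposition \ref{ParaMiguel2} with the norm-one formal identity $Id:\ell_p^2(\R)\to\ell_2^2(\R)$ for $1\le q<2$, and invoke Proposition \ref{ParaMiguel1} directly for $q=2$. Your explicit verification that $Id(z)$ must lie on $S_{\ell_2^2}$ before applying Proposition \ref{ParaMiguel2} is a welcome tightening of a step the paper passes over quickly, but the argument is the same.
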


Next we observe that every time that we put the surpremum norm in the range space, the property fails for any pair of the form $(X, \ell_{\infty}^2)$.

\begin{prop} \label{sBPBp1} The pair $(X, \ell_{\infty}^2)$ fails the uniform sBPBp for all Banach space $X$.
\end{prop}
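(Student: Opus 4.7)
The plan is to generalize the construction of Example \ref{exsBPBp}, replacing the domain $\ell_2^2$ by an arbitrary Banach space $X$. Fix the witnessing slack $\e = 1$. For every $\eta \in (0,1)$ I will exhibit $x_0 \in S_X$ and $T \in S_{\mathcal{L}(X, \ell_{\infty}^2)}$ such that $\|T(x_0)\|_\infty > 1 - \eta$ yet every $z \in S_X$ with $\|T(z)\|_\infty = 1$ satisfies $\|z - x_0\| \geq 1$; this immediately rules out the uniform sBPBp at the scale $\e = 1$.

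The construction is as follows. Pick any $x_0 \in S_X$ and, by Hahn--Banach, a functional $x_0^* \in S_{X^*}$ with $x_0^*(x_0) = 1$. Since $\dim X \geq 2$ (the one-dimensional case being trivial and tacitly excluded), the hyperplane $\{y^* \in X^* : y^*(x_0) = 0\}$ is a nonzero subspace of $X^*$, so I may select a unit vector $y_0^*$ lying in it. Then define $T: X \to \ell_{\infty}^2$ by
\[
T(x) := \bigl( (1 - \tfrac{\eta}{2})\, x_0^*(x),\; y_0^*(x) \bigr).
\]

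The three verifications are routine. For $\|T\| = 1$: on $B_X$ one has $\|T(x)\|_\infty = \max\bigl((1-\eta/2)|x_0^*(x)|,\, |y_0^*(x)|\bigr) \leq 1$, while $\|T\| \geq \|y_0^*\| = 1$. For the almost-norm-attainment: $\|T(x_0)\|_\infty = \max(1-\eta/2,\, 0) = 1 - \eta/2 > 1 - \eta$. For the separation: if $\|T(z)\|_\infty = 1$ with $z \in S_X$, the strict bound $(1-\eta/2)|x_0^*(z)| \leq 1 - \eta/2 < 1$ forces $|y_0^*(z)| = 1$, whence
\[
\|z - x_0\| \geq |y_0^*(z) - y_0^*(x_0)| = |y_0^*(z)| = 1.
\]

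No substantial obstacle is expected. The design principle is that the second coordinate $y_0^*$ can always be chosen to vanish at $x_0$ as soon as $\dim X \geq 2$, and this alone forces every norm-attaining point of $T$ to lie at distance at least $1$ from the almost-norm-attaining vector $x_0$. Crucially the bound $\|z - x_0\| \geq 1$ is independent of $\eta$, exactly as required to defeat any uniform choice $\eta(\e)$.
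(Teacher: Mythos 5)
Your proposal is correct and follows essentially the same route as the paper: an operator of the form $T(x) = \bigl((1-\delta)\,x_0^*(x),\, y_0^*(x)\bigr)$ with a norm-one second coordinate vanishing at $x_0$, forcing any norm-attaining point $z$ to satisfy $|y_0^*(z)|=1$ and hence $\|z-x_0\|\geq 1$ (the paper phrases this with a biorthogonal system $x_i^*(x_j)=\delta_{ij}$, which is only a cosmetic difference, and it shares your tacit assumption $\dim X \geq 2$).
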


\begin{proof}
Suppose that there exists $\eta(\e) > 0$ that depends only of a given $\e > 0$ satisfying the property. Let $x_1^*, x_2^* \in S_{X^*}$ and $x_1, x_2 \in S_X$ be such that $x_i^*(x_j) = \delta_{ij}$ for $i, j = 1, 2$. Define $T: X \longrightarrow \ell_{\infty}^2$ by
\begin{equation*}
T(x) := \left( \left(1 - \eta(\e) \right)x_1^*(x), x_2^*(x) \right),
\end{equation*}
for all $x \in X$. Then $\ds \|T(x_1)\|_{\infty} = 1 - \eta(\e)$ and $\|T(x_2)\|_{\infty} = 1$. Moreover, since  $\ds 1 - \eta(\e) < 1$, we have that $\|T\| \leq 1$. This shows that $\|T\| = 1$. Therefore, there exists $z \in S_X$ such that $\|T (z)\|_{\infty} = 1$ and $\|z - x_1\| < \e$. Since $\|T(z)\|_{\infty} = \max \left\{ |(1 - \eta(\e))|x_1^*(z)|, |x_2^*(z)| \right\}$ and $(1 - \eta(\e))|x_1^*(z)| < 1$, we have that $|x_2^*(z)| = 1$. On the other hand, since  $|x_2^*(z - x_1)| \leq \|z - x_1\| < \e$ we get a contradiction, since
\begin{equation*}
1 = |x_2^*(z)| = |x_2^*(z - x_1) + x_2^*(x_1)| = |x_2^*(z - x_1)| < \e < 1.
\end{equation*}
\end{proof}

\begin{example}  It is a consequence of the fact that the pair $(\ell_2^2, \ell_{\infty}^2)$ fails the uniform sBPBp that the pair $(\ell_2^2, C[0, 1])$ also fails this property. Indeed, consider $f_1, f_2 \in C[0, 1]$ positive functions defined on $[0, 1]$ such that $\|f_1\|_{\infty} = \|f_2\|_{\infty} = 1$, $\supp (f_1) \subset \left[0, \frac{1}{2} \right]$ and $\supp (f_2) \subset \left[\frac{1}{2}, 1 \right]$. Define $\varphi: \ell_{\infty}^2 \longrightarrow C[0, 1]$ by
\begin{equation*}
\varphi(x, y)(t) := f_1(t)x + f_2(t)y
\end{equation*}
for all $t \in [0, 1]$ and $(x, y) \in \ell_{\infty}^2$. Then $\varphi$ is linear. Since for $t \in \left[ 0, \frac{1}{2} \right)$ we have $|\varphi(x, y)(t)| = |f_1(t)x|$ and for $t \in \left[\frac{1}{2}, 1 \right]$ we have $|\varphi(x, y)(t)| = |f_2(t)y|$, we get that
\begin{equation*}
\|\varphi(x, y)\|_{\infty} = \sup_{t \in [0, 1]} | \varphi(x, y)(t)| = \max \{ |x|, |y| \} = \|(x, y)\|_{\infty}
\end{equation*}
for all $(x, y) \in \ell_{\infty}^2$, using the fact that $\|f_1\|_{\infty} = \|f_2\|_{\infty} = 1$. This shows that $\varphi$ is a linear isometry between $\ell_{\infty}^2$ and the closed subspace of $C[0, 1]$ generated by $f_1$ and $f_2$. Now suppose by contradiction that the pair $(\ell_2^2, C[0, 1])$ has the uniform sBPBp. Then there exists $\eta(\e) > 0$ for all $\e \in (0, 1)$, the modulus of the uniform sBPBp for this pair. Let $T: \ell_2^2 \longrightarrow \ell_{\infty}^2$ with $\|T\| = 1$ and $z_0 \in S_{\ell_2^2}$ be such that
\begin{equation*}
\|T(z_0)\|_{\infty} > 1 - \eta(\e).
\end{equation*}
Define $S:\ell_2^2 \longrightarrow C[0, 1]$ by $S = \varphi \circ T$. Then for all $(x, y) \in S_{\ell_2^2}$, we have that $T(x, y) \in B_{\ell_{\infty}^2}$ and since $\varphi$ is an isometry, we get that
\begin{equation*}
\|S(x, y)\|_{\infty} = \|\varphi \circ T(x, y)\|_{\infty} = \|T(x, y)\|_{\infty},
\end{equation*}
which implies that $\|S\| = \|T\| = 1$. Also,
\begin{equation*}
\|S(z_0)\|_{\infty} = \|T(z_0)\|_{\infty} > 1 - \eta (\e).
\end{equation*}
So, there exists $z_1 \in S_{\ell_2^2}$ such that $\|S(z_1)\|_{\infty} = 1$ and $\|z_1 - z_0\| < \e$. Since $\|T(z_1)\|_{\infty} = \|S(z_1)\|_{\infty} = 1$, we just have proved that the pair $(\ell_2^2, \ell_{\infty}^2)$ also has the uniform sBPBp which contradicts Example \ref{exsBPBp} and Proposition \ref{sBPBp1}.	
	
\end{example}

Next we show that if $Y$ is a $2$-dimensional Banach space, then the pair $(Y, Y)$ does not have the uniform sBPBp. To do so, we use the existence of the Auerbach base for a finite dimensional Banach space (see, for example, \cite[Proposition 20.21]{Jam}). Let $Y$ be an $n$-dimensional Banach space. Then there are elements $e_1, \ldots, e_n$ of $Y$ and $y_1^*, \ldots, y_n^*$ of $Y^*$ such that $\|e_i\| = \|y_i^*\| = 1$ for all $i$ and $y_i^*(e_j) = \delta_{ij}$ for $i \not= j$. In fact, $\{ e_1, \ldots, e_n \}$ is a base of $Y$ called the  Auerbach base of $Y$.

\begin{prop} Let $Y$ be a $2$-dimensional Banach space. Then the pair $(Y, Y)$ fails the uniform sBPBp.
\label{yy}
\end{prop}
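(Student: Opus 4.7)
The plan is to imitate the proof of Proposition \ref{sBPBp1}, using an Auerbach basis of $Y$ to play the role the standard basis of $\ell_{\infty}^2$ plays there. I will fix an Auerbach basis $\{e_1, e_2\}$ of $Y$ with biorthogonal functionals $y_1^*, y_2^* \in S_{Y^*}$, so that $y = y_1^*(y)\, e_1 + y_2^*(y)\, e_2$ for every $y \in Y$. For a fixed $\e \in (0, 1)$, assuming for contradiction that some $\eta := \eta(\e) > 0$ witnesses the uniform sBPBp for $(Y, Y)$, I will define
\begin{equation*}
T(y) := \left(1 - \tfrac{\eta}{2}\right) y_1^*(y)\, e_1 + y_2^*(y)\, e_2
\end{equation*}
and aim to show that $\|T\| = 1$ and $\|T(e_1)\| > 1 - \eta$, yet every $z \in S_Y$ at which $T$ attains its norm must satisfy $\|z - e_1\| \geq 1$, producing the desired contradiction as soon as $\e < 1$.

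The whole argument will ride on the algebraic identity
\begin{equation*}
T(y) = \left(1 - \tfrac{\eta}{2}\right) y + \tfrac{\eta}{2}\, y_2^*(y)\, e_2,
\end{equation*}
which is immediate from the basis decomposition of $y$. Because $|y_2^*(y)| \leq \|y\|$ and $\|e_2\| = 1$, the triangle inequality will yield $\|T(y)\| \leq \|y\|$ at once, so $\|T\| \leq 1$, while $T(e_2) = e_2$ forces $\|T\| = 1$; in parallel $\|T(e_1)\| = 1 - \eta/2 > 1 - \eta$. The sBPBp hypothesis will then provide $z \in S_Y$ with $\|T(z)\| = 1$ and $\|z - e_1\| < \e$. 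Applying the same identity at $y = z$ and using $\|T(z)\| = 1$ will force $|y_2^*(z)| = 1$ via a second triangle inequality; since $y_2^*(e_1) = 0$, this will give $\|z - e_1\| \geq |y_2^*(z - e_1)| = 1$, contradicting $\|z - e_1\| < \e < 1$.

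The only step that requires real thought is verifying $\|T\| \leq 1$, since on a general 2-dimensional Banach space one cannot hope to compute norms coordinatewise or exploit uniform convexity. The identity above bypasses this obstacle entirely, reducing $\|T\| \leq 1$ to a single application of the triangle inequality; once that is in hand, the rest of the proof is a line-by-line translation of the argument in Proposition \ref{sBPBp1}, with the pair $(y_1^*, y_2^*)$ of biorthogonal functionals substituting for the pair $(x_1^*, x_2^*)$ used there.
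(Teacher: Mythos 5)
Your proposal is correct and is essentially the paper's own argument: your operator is exactly the paper's $T_{\beta}$ with $\beta = 1 - \eta/2$, and your identity $T(y) = (1-\tfrac{\eta}{2})y + \tfrac{\eta}{2}\,y_2^*(y)\,e_2$ is just a repackaging of the paper's add-and-subtract step used to prove $\|T_{\beta}\|\leq 1$, with the same conclusion $|y_2^*(z)|=1$ and hence $\|z-e_1\|\geq 1$.
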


\begin{proof} Let $\{e_1, e_2\}$ and $\{y_1^*, y_2^*\}$ satisfying $\|e_i\| = \|y_i^*\| = 1$ for $i = 1, 2$ and $y_i^*(e_j) = \delta_{ij}$ for $i, j = 1, 2$. Since $\{e _1, e_2 \}$ is a base for $Y$, every $y \in Y$ has an expression in terms of $e_1, e_2, y_1^*$ and $y_2^*$ given by $y = y_1^*(y)e_1 + y_2^*(y) e_2$. Given $\beta \in (0, 1)$, define the continuous linear operator $T_{\beta}: Y \to Y$ by $T_{\beta}(y) = \beta y_1^*(y)e_1 + y_2^*(y)e_2$ for all $y = y_1^*(y)e_1 + y_2^*(y) e_2 \in Y$. Then for all $y \in S_Y$, we have that
\begin{eqnarray}
\|T_{\beta}(y)\| \nonumber &=& \| \beta y_1^*(y)e_1 + y_2^*(y)e_2\| \\ \nonumber
&=& \| \beta y_1^*(y)e_1 + \beta y_2^*(y)e_2 - \beta y_2^*(y)e_2 + y_2^*(y)e_2\| \\ \nonumber
&\leq& \| \beta (y_1^*(y)e_1 + y_2^*(y)e_2) \| + \| (1 - \beta) y_2^*(y)e_2\| \\ \nonumber
&=& \beta \|y\| + (1 - \beta) \|y_2^*(y) e_2\| \\ \nonumber
&\leq& \beta \|y\| + (1 - \beta) |y_2^*(y)| \\ \nonumber
&\leq& \beta + 1 - \beta \\ \nonumber
&=& 1. \nonumber
\end{eqnarray}
Then $\|T_{\beta}\| \leq 1$. Also, note that $\|T_{\beta} (e_2)\| = \|e_2\| = 1$. So $\|T_{\beta}\| = 1$. Now let $y_0 \in S_Y$ be such that $\|T_{\beta}(y_0)\| = 1$. Then, using that
\begin{equation*}
1 = \|T_{\beta} (y_0)\| \leq \beta \|y_0\| + (1 - \beta)|y_2^*(y_0)| \leq 1,
\end{equation*}
we get that $|y_2^*(y_0)| = 1$ and therefore $\|e_1 - y_0\| \geq |y_2^*(e_1) - y_2^*(y_0)| = |-1| = 1$.
Finally, if the pair $(Y, Y)$ has the uniform sBPBp, there exists $\eta(\e) > 0$ satisfying the property. If we put $\beta = 1 - \frac{\eta(\e)}{2}$, there exists an operator $T \in \mathcal{L}(Y, Y)$ such that $\|T\| = 1$, $\|T(e_1)\| > 1 - \eta(\e)$ and for all $y_0 \in S_Y$ which satisfies $\|T(y_0)\| = 1$ is such that $\|e_1 - y_0\| \geq 1$. This is a contradiction and the pair $(Y, Y)$ fails the uniform sBPBp.

\end{proof}

\begin{rem} It is clear but it is worth mentioning that if the pair $(X, Y)$ has the uniform sBPBp, then the pair $(X, Z)$ also has this property for all closed subspace $Z$ of $Y$. This implies, by the above proposition, that the pair $(Y, Y)$ fails the uniform sBPBp for all $n$-dimensional finite space $Y$. Anyway, just for curiosity, if $\dim(Y) = n \geq 2$, the proof of Proposition \ref{yy} works as well in this situation. Indeed, for $\beta \in (0, 1)$ we define $T_{\beta} \in \mathcal{L}(Y, Y)$ by
\begin{equation*}
T_{\beta}(y) = \beta y_1^*(y)e_1 + \beta y_2^*(y)e_2 + \ldots + \beta y_{n-1}^*(y) e_{n-1} + y_n^*(y)e_n
\end{equation*}
for all $y \in Y$, where $\{ e_1, \ldots, e_n\} \subset S_Y$ and $\{ y_1^*, \ldots, y_n^* \} \subset S_{Y^*}$ is given by the Auerbach basis. Then $\|T_{\beta}(e_i)\| = \beta$ for $i \not= n$ and $\|T_{\beta}(e_n)\| = 1$. To prove that $\|T_{\beta}\| \leq 1$, we add and subtract $\beta y_1^*(y)e_1 + \ldots y_{n-2}^*(y) e_{n-2} + \beta y_n^*(y)e_n$ in $\|T_{\beta} (y)\|$ where $y \in S_Y$, to get $\|T_{\beta} (y)\| \leq \beta \|y\| + (1 - \beta)|y_n^*(y)| \leq 1$. Now, it is clear that if $T_{\beta}$ attains its norm at some $y_0 \in S_Y$ then $\|e_i - y_0\| \geq |y_n^*(e_ i - y_0)| = |y_n^*(y_0)| = 1$ for all $i \not= n$.
\end{rem}

\begin{cor} If $Y$ is a Banach space which contains strictly convex $2$-dimensional subspaces, then there exists a uniformly convex Banach space $X$ such that the pair $(X, Y)$ fails the uniform sBPBp.
\end{cor}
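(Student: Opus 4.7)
The plan is to combine Proposition \ref{yy} with the subspace remark that immediately precedes the corollary, using the standard fact that in finite dimensions strict convexity coincides with uniform convexity.

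First, I would pick a strictly convex $2$-dimensional subspace $Z \subset Y$, which exists by hypothesis. Since $Z$ is finite dimensional and strictly convex, its unit sphere is a compact strictly convex set, and a standard compactness argument shows that $Z$ is in fact uniformly convex. (If uniform convexity failed, sequences $(x_n),(y_n) \subset S_Z$ with $\|x_n-y_n\| \geq \varepsilon$ and $\|\tfrac{1}{2}(x_n+y_n)\| \to 1$ would, after passing to convergent subsequences, produce two distinct points $x,y \in S_Z$ with $\|\tfrac{1}{2}(x+y)\|=1$, contradicting strict convexity.) This furnishes the candidate uniformly convex space $X := Z$.

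Next, I would apply Proposition \ref{yy} to the $2$-dimensional Banach space $Z$ to conclude that the pair $(Z,Z)$ fails the uniform sBPBp. Finally, I would invoke the remark preceding the corollary, which says that the uniform sBPBp passes from the pair $(X,Y)$ to $(X,W)$ for any closed subspace $W$ of $Y$. Since $Z$ is a closed subspace of $Y$ (being finite-dimensional), if the pair $(Z,Y)$ had the uniform sBPBp, then so would $(Z,Z)$. Taking the contrapositive, $(Z,Y)$ fails the uniform sBPBp, completing the proof with $X=Z$.

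There is no real obstacle here; the only mildly non-trivial ingredient is the equivalence of strict convexity and uniform convexity in finite dimensions, which is a standard compactness argument and could be quoted as folklore if one wishes to keep the proof short. Everything else is a direct application of material already in the excerpt.
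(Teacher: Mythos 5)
Your proposal is correct and follows exactly the paper's argument: take the strictly convex $2$-dimensional subspace $Z$, note it is uniformly convex by finite-dimensionality, apply Proposition \ref{yy} to $(Z,Z)$, and pass to $(Z,Y)$ via the preceding remark about closed subspaces of the range. The only difference is that you spell out the compactness argument for strict implies uniform convexity in finite dimensions, which the paper simply asserts.
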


\begin{proof}
Indeed, let $Z$ be a subspace of $Y$ such that $Z$ is stricly convex and $\dim (Z) = 2$. Then $X = Z$ is uniformly convex, since $Z$ is finite dimensional. By Proposition \ref{yy}, the pair $(X, Z)$ fails the uniform sBPBp and by the above observation the pair $(X, Y)$ cannot have this property.
\end{proof}

Next we use the negatives results that we got so far about the uniform sBPBp to get examples of pairs $(X, Y)$ that fail the strong Bishop-Phelps-Bollob\'as prooperty. In Remark \ref{sBPBp5} we noted that if $X$ is not reflexive, then the pair $(X, Y)$ does not have the sBPBp. In what follows we get examples of reflexive Banach spaces $X$ such that the pair $(X, Y)$ fails the sBPBp. We also present a complete characterization for the pairs $(\ell_p, \ell_q)$ concerning this property by showing that there are cases that these pairs satisfy the property and other cases not (see Theorem \ref{sBPBp9}). First of all we use the fact, which is showed in the next remark, that the pair $(\ell_2, \ell_2^2)$ fails the uniform sBPBp to get examples of Banach spaces $Y$ such that the pair $(\ell_2, Y)$ fails the sBPBp (see Corollary \ref{sBPBp7}).

\begin{rem} \label{sBPBp4} By Corollary \ref{sBPBp3}, the pair $(\ell_2, Z)$ has the sBPBp if $\dim(Z) < \infty$. But in the case of the uniform sBPBp, we get a negative result. We note that the pair $(\ell_2, \ell_2^2)$ fails the uniform sBPBp. Indeed, suppose by contradiction that this pair satisfies the property. Then given $\e > 0$ there exists $\eta(\e) > 0$ such that whenever $T \in S_{\mathcal{L}(\ell_2, \ell_2^2)}$ and $x_0 \in S_{\ell_2}$ are such that $\|T(x_0)\|_2 > 1 - \eta(\e)$, there is $x_1 \in S_{\ell_2}$ such that $\|T(x_1)\| = 1$ and $\|x_1 - x_0\| < \e$. Since the pair $(\ell_2^2, \ell_2^2)$ fails the uniform sBPBp, there exists some $\e_0 > 0$, a norm one linear operator $R: \ell_2^2 \longrightarrow \ell_2^2$ and a norm one vector $(a_0, b_0) \in S_{\ell_2^2}$ with $\|R(a_0, b_0)\| > 1 - \eta(\e_0)$ such that there is no point $(c_1, c_2) \in S_{\ell_2^2}$ such that $\|R(c_1, c_2)\|_2 = 1$ and $\|(c_1, c_2) - (a_0, b_0)\|_2 < \e_0$. Let $\pi: \ell_2 \longrightarrow \ell_2^2$ be the projection on the first two coordinates, i.e., $\pi ((a_n)_n) := (a_1, a_2)$ for all $(a_n)_n \in \ell_2$. Then $\|\pi\| = 1$. Define $T: \ell_2 \longrightarrow \ell_2^2$ by $T := R \circ \pi$. Then $\|T\| = \|R\| = 1$. Let $x_0 := (a_0, b_0, 0, 0, \ldots) \in S_{\ell_2}$. We have that
\begin{equation*}
\|T(x_0)\| = \|R(a_0, b_0)\| > 1 - \eta(\e_0).
\end{equation*}
Then there exists $x_1 := (c_n)_n \in S_{\ell_2}$ such that $\|T(x_1)\|_2 = 1$ and $\|x_1 - x_0\|_2 < \e_0$. Since
\begin{equation*}
1 = \|T(x_1)\|_2 = \| R(\pi(x_1))\|_2 = \|R(c_1, c_2)\|_2 \leq \|(c_1, c_2)\|_2 \leq \|x_1\|_2 = 1,
\end{equation*}
we get that $\|R(c_1, c_2)\|_2 = \|(c_1, c_2)\|_2 = 1$. On the other hand, $\|(a_0, b_0) - (c_1, c_2)\|_2 \leq \|x_0 - x_1\|_2 < \e_0$. This is a contradiction and then the pair $(\ell_2, \ell_2^2)$ fails the sBPBp as desired.
\end{rem}

The next theorem connects both sBPBp and uniform sBPBp in order to get examples of pairs of Banach spaces $(X, Y)$ that not satisfy the sBPBp by using the examples of the pairs that fail the uniform sBPBp.

\begin{theorem} If the pair $(X, Y)$ fails the uniform sBPBp, then the pair $(\ell_2 (X), \ell_{\infty}(Y))$ fails the sBPBp.
\end{theorem}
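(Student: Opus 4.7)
The plan is to build, from the failure data for $(X,Y)$, a single norm-one operator $\Phi: \ell_2(X) \to \ell_{\infty}(Y)$ that by itself witnesses the failure of the sBPBp. Since the pair $(X,Y)$ fails the uniform sBPBp, I can fix $\varepsilon_0 \in (0, 2)$ and, for each $n \in \mathbb{N}$, choose $T_n \in S_{\mathcal{L}(X,Y)}$ together with $x_n \in S_X$ such that $\|T_n(x_n)\| > 1 - 1/n$, while $\|x_n - z\| \geq \varepsilon_0$ holds for every $z \in S_X$ with $\|T_n(z)\| = 1$ (the latter condition being vacuous when $T_n$ fails to attain its norm).

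Next I would define $\Phi\bigl((z_n)_n\bigr) := \bigl(T_n(z_n)\bigr)_n$. The estimate $\|T_n(z_n)\| \leq \|z_n\| \leq \|(z_n)_n\|_{\ell_2(X)}$ shows that $\Phi$ is a well-defined contraction from $\ell_2(X)$ into $\ell_{\infty}(Y)$. Writing $\xi_n$ for the vector of $\ell_2(X)$ whose $n$-th coordinate equals $x_n$ and whose other coordinates vanish, one has $\|\xi_n\|_{\ell_2(X)} = 1$ and $\|\Phi(\xi_n)\|_{\infty} \geq \|T_n(x_n)\| > 1 - 1/n$, so in particular $\|\Phi\| = 1$.

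The decisive step is the description of $NA(\Phi)$. Suppose $\zeta = (z_n)_n \in S_{\ell_2(X)}$ satisfies $\|\Phi(\zeta)\|_{\infty} = 1$. The $\ell_2$-summability of $(\|z_n\|)_n$ forces $\|z_n\| \to 0$, hence $\|T_n(z_n)\| \leq \|z_n\| \to 0$. Therefore the supremum $\sup_n \|T_n(z_n)\| = 1$ must be attained at some index $m$, and then $1 = \|T_m(z_m)\| \leq \|z_m\| \leq 1$ gives $\|z_m\| = 1$, which in turn forces $z_k = 0$ for every $k \neq m$. Consequently every element of $NA(\Phi)$ is concentrated in a single slot, namely of the form $\zeta = (0,\dots,0,z,0,\dots)$ with $z$ in position $m$, for some $m \in \mathbb{N}$ and some $z \in S_X$ with $\|T_m(z)\| = 1$. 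Comparing such a $\zeta$ with $\xi_n$ yields $\|\xi_n - \zeta\|_{\ell_2(X)} = \sqrt{2}$ when $m \neq n$, and $\|\xi_n - \zeta\|_{\ell_2(X)} = \|x_n - z\| \geq \varepsilon_0$ when $m = n$ by the choice of $T_n$ and $x_n$. Thus $\dist(\xi_n, NA(\Phi)) \geq \min\{\varepsilon_0, \sqrt{2}\}$ for every $n$, while $\|\Phi(\xi_n)\|_{\infty} \to 1$, contradicting the sBPBp for this operator $\Phi$.

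The main obstacle I anticipate is the description of $NA(\Phi)$: a priori one must rule out ``spread-out'' norm-attaining sequences in which several coordinates contribute simultaneously. This turns out to be painless because $\ell_2$-summability forces the coordinates to decay, so the sup-norm in the range can only be reached at a single slot; once this is in hand, the remainder of the argument reduces to a direct distance computation that feeds the failure data at level $n$ back into a single fixed operator.
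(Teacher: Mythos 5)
Your proof is correct and takes essentially the same route as the paper: the same diagonal operator $(z_n)_n \mapsto (T_n(z_n))_n$ from $\ell_2(X)$ to $\ell_\infty(Y)$, tested on the single-slot vectors $x_n\overline{e_n}$. The only difference is organizational---you first characterize $NA(\Phi)$ globally (every norm-attaining unit vector is concentrated in one coordinate) and then bound $\dist(\xi_n, NA(\Phi)) \geq \min\{\e_0,\sqrt{2}\}$, whereas the paper extracts the same single-coordinate conclusion only for the point supplied by the assumed sBPBp; both hinge on the identical observation that a norm-one image in the sup-norm forces one coordinate of norm one and all others to vanish (or be small).
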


\begin{proof} Suppose that the pair $(X, Y)$ fails the uniform sBPBp. Then for each $n \in \N$, there are $\e_0  \in (0, 1)$, $T_n \in S_{\mathcal{L}(X, Y)}$ and $x_n \in S_X$ with
\begin{equation*}
\|T_n(x_n)\| > \frac{n}{n + 1} = 1 - \frac{1}{n+1}
\end{equation*}
such that whenever $x \in S_X$ satisfies $\|x - x_n\| < \e_0$ we have $\|T(x)\| < 1$. Define $T: \ell_2 (X) \longrightarrow \ell_{\infty}(Y)$ by
\begin{equation*}
T((z_n)_n) := (T_n(z_n))_n
\end{equation*}
for every $(z_n)_n \in \ell_2(X)$. Since $\|T_n\| = 1$ for all $n \in \N$, we get that $\|T\| \leq 1$. On the other hand,
\begin{equation*}
\|T\| \geq \|T((x_n)_n)\| = \sup_{n \in \N} \|T_n(x_n)\| \geq \sup_{n \in \N} \frac{n}{n+1} = 1.
\end{equation*}
So $\|T\| = 1$. We suppose that there is $\eta(\e_0, T) > 0$ such that the pair $(\ell_2(X), \ell_{\infty}(Y))$ has the sBPBp with this constant. Denote by $z \overline{e_n}$ the element of $\ell_2(X)$ such that in the $n$-th position is $z$ and in the rest is zero for all $z \in X$. We take $n \in \N$ to be such that
\begin{equation*}
\frac{1}{n+1} < \eta(\e_0, T).
\end{equation*}
Observe that
\begin{equation*}
\|T(x_n \overline{e_n})\| = \|T_n(x_n)\| > 1 - \frac{1}{n + 1} > 1 - \eta(\e_0, T).
\end{equation*}
Thus there is $v = (v_n)_n \in S_{\ell_2(X)}$ such that
\begin{equation*}
\|T(v)\|_{\infty} = 1 \ \ \mbox{and} \ \ \|v - x_n \overline{e_n}\|_2 < \e_0.
\end{equation*}
By the second inequality, we get that $\|v_j\| < \e_0 < 1$ for all $j \not= n$ and $\|v_n - x_n\| < \e_0$. Moreover, by the equality
\begin{equation*}
1 = \|T(v)\|_{\infty} = \sup_{j \in \N} \|T_j (v_j)\|
\end{equation*}
we get that $\|T_n(v_n)\| = 1$ since $\|T_j(v_j)\| < \e_0 < 1$ for all $j \not= n$. So $\|v_n\| = 1$. This contradicts the beggining of the proof and we conclude that the pair $(\ell_2(X), \ell_{\infty}(Y))$ fails the sBPBp whenever $(X, Y)$ fails the uniform sBPBp.
\end{proof}

\begin{cor} There is a infinite dimensional Banach space $Z$ such that the pair $(\ell_2, Z)$ fails the sBPBp.
\end{cor}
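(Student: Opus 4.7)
The plan is to invoke the preceding theorem, which asserts that whenever $(X, Y)$ fails the uniform sBPBp, the pair $(\ell_2(X), \ell_\infty(Y))$ fails the sBPBp. All I need is a finite-dimensional witness $X$ such that $\ell_2(X)$ is isometric to $\ell_2$, together with some $Y$ for which the uniform sBPBp fails on $(X, Y)$.

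The natural choice is $X = Y = \ell_2^2$, because Example \ref{sBPBp8} already exhibits a direct witness of the failure of the uniform sBPBp for $(\ell_2^2, \ell_2^2)$. The theorem then produces that the pair $(\ell_2(\ell_2^2), \ell_\infty(\ell_2^2))$ fails the sBPBp.

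To finish, I would observe that $\ell_2(\ell_2^2)$ is a separable infinite-dimensional Hilbert space, hence linearly isometric to $\ell_2$ (explicitly, one identifies the canonical basis of $\ell_2$ with the natural basis of $\ell_2(\ell_2^2)$ obtained by pairing up the standard basis of $\ell_2$ two-by-two). Since the sBPBp is plainly preserved under linear isometries of the domain --- such an isometry preserves operator norms, norm-attainment points, and distances between unit vectors --- setting $Z := \ell_\infty(\ell_2^2)$ yields an infinite-dimensional Banach space for which $(\ell_2, Z)$ fails the sBPBp.

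The argument is essentially a bookkeeping reduction: the only point requiring any care is confirming that the sBPBp transfers through the identification $\ell_2(\ell_2^2) \cong \ell_2$, which is immediate from the definition. I do not anticipate any substantive obstacle beyond choosing the witness correctly and verifying the isometric identification.
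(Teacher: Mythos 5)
Your proposal is correct and follows exactly the paper's own argument: apply the preceding theorem with $X = Y = \ell_2^2$ (whose failure of the uniform sBPBp is Example \ref{sBPBp8}), take $Z = \ell_\infty(\ell_2^2)$, and use the isometry $\ell_2(\ell_2^2) \cong \ell_2$ to transfer the failure of the sBPBp. Nothing further is needed.
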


\begin{proof} We take $Z = \ell_{\infty} (\ell_2^2)$. Since the pair $(\ell_2^2, \ell_2^2)$ fails the uniform sBPBp, the pair $(\ell_2 (\ell_2^2), \ell_{\infty}(\ell_2^2))$ fails the sBPBp. But $\ell_2 (\ell_2^2)$ is isometric to $\ell_2$. So the pair $(\ell_2, Z)$ fails the sBPBp.
\end{proof}

We give a list of examples of Banach spaces $Y$ such that the pair $(\ell_2, Y)$ fails the sBPBp.

\begin{cor} \label{sBPBp7} Suppose that $(X, Y)$ is one of the following pairs:
\begin{itemize}
\item[(1)] $(\ell_2^2, \ell_{\infty}^2)$,
\item[(2)] $(\ell_2^2, \ell_2^2)$,
\item[(3)] $(\ell_p^2, \ell_q^2)$ for $1 < p \leq q < \infty$.
\item[(4)] $(\ell_2^2, \ell_1^2)$,
\item[(5)] $(\ell_2^2, \ell_q^2)$, for $1 \leq q < 2$.
\item[(6)] $(\ell_2^2 (\R), \ell_q^2(\R))$ for $1 \leq q \leq \infty$.
\item[(7)] $(\ell_p^2(\R), \ell_q^2(\R))$ for $1 < p \leq 2$ and $1 \leq q \leq 2$.
\item[(8)] $(\ell_2^2, C[0, 1])$,
\item[(9)] $(Y, Y)$, where $\dim(Y) = 2$.
\end{itemize}
Then the pair $(\ell_2(X), \ell_{\infty}(Y))$ fails the sBPBp. In particular, the pairs $(\ell_2, \ell_{\infty}(Z))$ fail the sBPBp when
\begin{itemize}
\item[(a)] $Z = \ell_{\infty}^2$, \ $\ell_2^2$, \  $\ell_1^2$, \ $C[0, 1], \ \ell_q^2 \ $ for $2 \leq q < \infty$ in both real and complex cases and
\item[(b)] $Z = \ell_q^2, \ \ell_q^2 \ $ for $1 \leq q \leq 2$ in the real case.
\end{itemize}
\end{cor}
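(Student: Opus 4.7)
The strategy is to apply the theorem immediately preceding this corollary, which asserts that the failure of the uniform sBPBp for a pair $(X,Y)$ forces the failure of the sBPBp for $(\ell_2(X), \ell_\infty(Y))$. Each of the nine pairs listed has already been shown earlier in this section to fail the uniform sBPBp: (1) is Example \ref{exsBPBp}; (2) is Example \ref{sBPBp8}; (3) is the consequence drawn immediately from Proposition \ref{ParaMiguel1}; (4) is the proposition following that consequence; (5) is Proposition \ref{ParaMiguel2}; (6) and (7) are the two corollaries obtained from Propositions \ref{ParaMiguel1} and \ref{ParaMiguel2}; (8) is the $C[0,1]$ example; and (9) is Proposition \ref{yy}. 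Hence the first assertion of the corollary follows by a single invocation of the preceding theorem in each of the nine cases.

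For the ``In particular'' part, I would observe that every $Z$ listed in (a) or (b) appears as the second coordinate of a pair in (1)--(9) whose first coordinate is $\ell_2^2$ (respectively $\ell_2^2(\R)$ in the real cases of (b)): the real-and-complex list (a) comes from items (1), (2), (4), (8), and item (3) specialized to $p=2$, while (b) comes from item (7) specialized to $p=2$. It therefore suffices to identify $\ell_2(\ell_2^2)$ with $\ell_2$ isometrically; the canonical coordinate-interleaving map
\begin{equation*}
\Phi: \ell_2(\ell_2^2) \longrightarrow \ell_2, \qquad \big((a_n, b_n)\big)_{n \in \N} \longmapsto (a_1, b_1, a_2, b_2, \ldots)
\end{equation*}
does the job (and restricts to the analogous map in the real case). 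Since the sBPBp is preserved under isometric isomorphism of the domain --- given a candidate $\eta$ one transports any operator $T$ and near-attainer $x_0$ through $\Phi$, and both the norm-attainment equality and the closeness inequality are preserved --- the failure of sBPBp for $(\ell_2(\ell_2^2), \ell_\infty(Z))$ transfers directly to $(\ell_2, \ell_\infty(Z))$, yielding the stated lists in (a) and (b).

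The proof is essentially bookkeeping: all the analytic content has already been concentrated in the preceding theorem and in the earlier counterexamples for the uniform sBPBp. The only minor points of care are matching each entry in (1)--(9) to the correct previous result and verifying the (standard) domain-isometry invariance of the sBPBp used in the ``in particular'' statement, both of which are routine.
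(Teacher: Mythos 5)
Your proposal is correct and follows exactly the route the paper intends: each listed pair fails the uniform sBPBp by the earlier examples and propositions, the preceding theorem then kills the sBPBp for $(\ell_2(X),\ell_\infty(Y))$, and the ``in particular'' part uses the isometry $\ell_2(\ell_2^2)\cong\ell_2$ just as in the corollary before this one. No substantive difference from the paper's (essentially omitted) argument.
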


In the next theorem we give a complete characterization of the strong Bishop-Phelps-Bollob\'as property for the pairs $(\ell_p, \ell_q)$.

\begin{theorem} \label{sBPBp9} The following holds.
\begin{itemize}
\item[(i)] The pair $(\ell_p, \ell_q)$ has the sBPBp whenever $1 \leq q < p < \infty$.
\item[(ii)] The pair $(\ell_p, \ell_q)$ fails the sBPBp whenever $1 < p \leq q < \infty$.
\end{itemize}
\end{theorem}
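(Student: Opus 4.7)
The plan is to prove the two parts by very different routes. For \textbf{part (i)}, note that $1 \le q < p < \infty$ forces $p > 1$, so $\ell_p$ is uniformly convex, while Pitt's classical theorem guarantees that every bounded linear operator from $\ell_p$ into $\ell_q$ is compact. Theorem \ref{sBPBp2} then applies to every $T \in \mathcal{L}(\ell_p, \ell_q)$, yielding the sBPBp for the pair $(\ell_p, \ell_q)$.

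For \textbf{part (ii)}, the plan is to manufacture a single operator $T \in S_{\mathcal{L}(\ell_p, \ell_q)}$ that witnesses the failure for every candidate modulus $\eta = \eta(\e, T) > 0$ at once. Fix an increasing sequence $\beta_n \in (0,1)$ with $\beta_n \to 1$, and let $T_{\beta_n}\colon \ell_p^2 \to \ell_q^2$, $T_{\beta_n}(x,y) = (\beta_n x, y)$, be the operators supplied by Proposition \ref{ParaMiguel1}: each has norm one, satisfies $\|T_{\beta_n}(e_1)\|_q = \beta_n$, and attains its norm on $S_{\ell_p^2}$ only at the points $\lambda e_2$ with $|\lambda| = 1$. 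Identifying $\ell_p$ with the $\ell_p$-sum $\bigl(\bigoplus_n \ell_p^2\bigr)_{\ell_p}$ by grouping the standard basis of $\ell_p$ into consecutive pairs (and $\ell_q$ with the analogous $\ell_q$-sum of the $\ell_q^2$'s), define $T \colon \ell_p \to \ell_q$ as the block-diagonal operator $\bigoplus_n T_{\beta_n}$; concretely, $(T(x))_{2n-1} = \beta_n x_{2n-1}$ and $(T(x))_{2n} = x_{2n}$ for all $n$.

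The main work is then the verification. First, $\|T\| = 1$: the lower bound follows from $T(e_{2n}) = e_{2n}$, and the upper bound from $\|T_{\beta_n}\| \le 1$ together with the norm-one inclusion $\ell_p \hookrightarrow \ell_q$ (valid since $p \le q$). Second, and this is the technical heart of the argument, I claim that every $x_1 \in S_{\ell_p}$ with $\|T(x_1)\|_q = 1$ must satisfy $x_{1,2n-1} = 0$ for every $n$. Indeed, writing $x_1^{(n)} = (x_{1,2n-1}, x_{1,2n}) \in \ell_p^2$ for the $n$-th block, the chain
\[
1 = \|T(x_1)\|_q^q = \sum_n \|T_{\beta_n}(x_1^{(n)})\|_q^q \le \sum_n \|x_1^{(n)}\|_p^q \le \|x_1\|_p^q = 1
\]
(the last step uses $\|x_1^{(n)}\|_p \le 1$ together with $p \le q$, which gives $a^q \le a^p$ for $a \in [0,1]$) must hold with equality throughout. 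Equality in the first inequality forces $\|T_{\beta_n}(x_1^{(n)})\|_q = \|x_1^{(n)}\|_p$ for every $n$, so by Proposition \ref{ParaMiguel1} each nonzero block $x_1^{(n)}$ is a scalar multiple of the block-level $e_2$; in particular $x_{1,2n-1}=0$. Consequently $\|x_1 - e_{2n-1}\|_p^p = 1 + \|x_1\|_p^p = 2$, i.e.\ $\|x_1 - e_{2n-1}\|_p = 2^{1/p}$.

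The failure of the sBPBp is then immediate. Fix $\e_0 = 1 < 2^{1/p}$. For any proposed modulus $\eta(\e_0, T) > 0$, choose $n$ with $\beta_n > 1 - \eta(\e_0, T)$ and set $x_0 = e_{2n-1}$; then $\|T(x_0)\|_q = \beta_n > 1 - \eta(\e_0, T)$, yet every $x_1 \in S_{\ell_p}$ with $\|T(x_1)\|_q = 1$ sits at distance $2^{1/p} > \e_0$ from $x_0$, contradicting the sBPBp. The main obstacle is the equality-case analysis that pins down the norm attainers of $T$: the argument has to cover both the strict case $p < q$ (where the outer inequality $\sum_n \|x_1^{(n)}\|_p^q \le \|x_1\|_p^q$ carries genuine content and actually forces $x_1$ to live in a single block) and the isometric case $p = q$ (where the outer step degenerates and all the rigidity must be extracted from the block operators $T_{\beta_n}$ themselves via Proposition \ref{ParaMiguel1}).
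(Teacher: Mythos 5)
Your proposal is correct and follows essentially the same route as the paper: part (i) is exactly the paper's argument (Pitt's theorem plus Theorem \ref{sBPBp2}), and for part (ii) you build the same block-diagonal operator on $\ell_p \cong \ell_p(\ell_p^2)$ with blocks $(x,y)\mapsto(\beta_n x, y)$, $\beta_n \uparrow 1$ (the paper takes $\beta_n = 1-\tfrac{1}{2n}$), and show its norm attainers have vanishing odd coordinates, which defeats any modulus $\eta(\e,T)$. The only cosmetic difference is that you extract the block-level rigidity via an equality chain and Proposition \ref{ParaMiguel1} (strictly speaking, from the fact $a=0$ established inside its proof rather than from its stated distance conclusion), whereas the paper re-runs the strict-inequality computation directly for the infinite-dimensional operator.
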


\begin{proof} (i) By the Pitt's theorem every bounded linear operator from $\ell_p$ into $\ell_q$ with $1 \leq q < p < \infty$ is compact. By Theorem \ref{sBPBp2} the pair $(\ell_p, \ell_q)$ has the sBPBp since $\ell_p$ is uniformly convex for $1 < p < \infty$.

(ii) Let $\e \in (0, 1)$ and $1 < p \leq q < \infty$. Consider $\ell_p$ and $\ell_q$ as the Banach spaces $\ell_p(\ell_p^2)$ and $\ell_q(\ell_q^2)$, respectively. For each $n \in \N$ define $T_n: \ell_p^2 \longrightarrow \ell_q^2$ by
\begin{equation*}
T_n(x, y) := \left( \left( 1 - \frac{1}{2n} \right)x, y \right)
\end{equation*}
for each $(x, y) \in \ell_p^2$. Let $z = ((x_n, y_n))_{n \in \N} \in \ell_2$ and let $T: \ell_p \longrightarrow \ell_q$ be defined by
\begin{equation*}
T(z) := (T_n(x_n, y_n))_{n \in \N} = \left( \left(1 - \frac{1}{2n}\right) x_n, y_n \right)_{n \in \N}.
\end{equation*}
For each $z = ((x_n, y_n))_{n \in \N} \in \ell_p$  we have that $\|z\|_p = \left( \sum_{j=1}^{\infty} |x_j|^p + |y_j|^p \right)^{\frac{1}{p}}$ and then
\begin{equation*}
\|T(z)\|_q = \left( \sum_{j=1}^{\infty} \left(1 - \frac{1}{2j}\right)^q |x_j|^q + |y_j|^q \right)^{\frac{1}{q}} \leq \left( \sum_{j=1}^{\infty} |x_j|^q + |y_j|^q \right)^{\frac{1}{q}} = \|z\|_q \leq \|z\|_p.
\end{equation*}
So $\|T\| \leq 1$. We consider the vectors
\begin{equation*}
e_{1, n} := ((0, 0), \ldots, (0, 0), (1, 0), (0, 0), \ldots) \  \mbox{and} \ e_{2, n} := ((0, 0), \ldots, (0, 0), (0, 1), (0, 0), \ldots) \in S_{\ell_p}.
\end{equation*}
Thus we get that $\|T(e_{2, n})\|_q = \|(0, 1)\|_q = 1$. So $\|T\| = 1$. Suppose that there exists $\eta(\e, T) > 0$ such that the pair $(\ell_p, \ell_q)$ has the sBPBp. Let $n \in \N$ be such that $\frac{1}{2n} < \eta(\e, T)$. So since $\|T\| = \|e_{1, n}\|_p = 1$ and
\begin{equation*}
\|T(e_{1, n})\|_q = 1 - \frac{1}{2n} > 1 - \eta(\e, T)
\end{equation*}
there exists $v = (u_n, w_n) \in \ell_p$ such that
\begin{equation*}
\|T(v)\|_q = \|v\|_p = 1 \ \ \ \mbox{and} \ \ \ \|v - e_{1, n}\|_2 < \e.
\end{equation*}
Next we claim that $u_j = 0$ for all $j \in \N$. Indeed, suppose that there exists some $j_0 \in \N$ such that $u_{j_0} \not= 0$. Thus
\begin{eqnarray*}
\|T(v)\|_q &=& \left( \sum_{j=1}^{\infty} \left(1 - \frac{1}{2j} \right)^q |u_j|^q + |w_j|^q \right)^{\frac{1}{q}} \\
&=& \left( \left(1 - \frac{1}{2 j_0} \right)^q |u_{j_0}|^q + |w_{j_0}|^q + \sum_{j \not= j_0} \left(1 - \frac{1}{2j} \right)^q |u_j|^q + |w_j|^q \right)^{\frac{1}{q}} \\
&<& \left( |u_{j_0}|^q + |w_{j_0}|^q + \sum_{j \not= j_0} |u_j|^q + |w_j|^q \right)^{\frac{1}{q}} \\
&=& \|v\|_q \\
&\leq& \|v\|_p = 1
\end{eqnarray*}
which is a contradiction. Then $u_j = 0$ for all $j \in \N$. Because of that, we have
\begin{equation*}
\|e_{1, n} - v\|_q = \left( 1 + \sum_{j \not= n} |w_j|^q \right)^{\frac{1}{q}} \geq 1^{\frac{1}{q}} = 1 > \e.
\end{equation*}
This new contradiction shows that the pair $(\ell_p, \ell_q)$ fails the sBPBp for $1 < p \leq q < \infty$.

\end{proof}

\bigskip

\noindent {\bf Acknowledgement.} This article is based on parts of my Ph.D. thesis at
Universidad de Valencia and I would like to thank Domingo Garc\'ia, Manuel Maestre and Miguel Mart\'in for their wonderful supervision on this work.

\end{document}